\documentclass[11pt]{article}
\usepackage{amssymb,amsmath}
\usepackage[thmmarks,amsmath,amsthm]{ntheorem}
\usepackage{amsfonts, psfrag,enumitem,indentfirst,float,geometry,color,enumerate}
\usepackage{graphicx,algorithm,algpseudocode,pifont}
\usepackage[hidelinks]{hyperref}

\usepackage{epstopdf}
\usepackage{stmaryrd}
\usepackage[normalem]{ulem}
\usepackage{xcolor}

\usepackage{booktabs}
\usepackage{siunitx}
\usepackage{subcaption}
\usepackage[title]{appendix}

\allowdisplaybreaks[4]
\usepackage{lineno}
\numberwithin{equation}{section}

\usepackage{lipsum}

\graphicspath{{figs/}}

\newtheorem{theorem}{Theorem}[section]
\newtheorem{lemma}{Lemma}[section]

\newtheorem{remark}{Remark}[section]
\newtheorem{example}{Example}
\newcommand{\commentout}[1]{{}} 

\newcommand{\vertiii}[1]{{\left\vert\kern-0.25ex\left\vert\kern-0.25ex\left\vert #1
    \right\vert\kern-0.25ex\right\vert\kern-0.25ex\right\vert}}

\newcommand{\jump}[1]{[\![#1]\!]}
    
\newcommand{\bfu}{\mathbf{u}}
\newcommand{\bfv}{\mathbf{v}}
\newcommand{\bff}{\mathbf{f}}
\newcommand{\bfS}{\mathbf{S}}
\newcommand{\bfx}{\mathbf{x}}
\newcommand{\bfn}{\mathbf{n}}

\newcommand{\bfI}{\mathbf{I}}

\usepackage{tikz}
\usepackage{tikz-3dplot}

\allowdisplaybreaks 
\begin{document}
	\title{A Structural Relationship Between Crouzeix–Raviart Immersed Finite Elements for Elliptic and Stokes Interface Problems}
    
	\author{Jiaying Ren$^{1}$, Guozhu Yu$^{1}$, Xu Zhang$^{2,*}$\\
		{\itshape\small $^1$ School of Mathematics, Southwest Jiaotong University, Chengdu, Sichuan 610031, China}\\
		{\itshape\small $^2$ Department of Mathematics, Oklahoma State University, Stillwater OK, 74078, USA}\\
	} 
	\renewcommand{\thefootnote}{\fnsymbol{footnote}}
	\footnotetext[1]{Corresponding author. \\
    {\itshape\small Emails:} 
    rjy586@my.swjtu.edu.cn (J. Ren), 
    yuguozhumail@163.com (G. Yu), 
    xzhang@okstate.edu (X. Zhang).\\
    G. Yu is partially supported by National Natural Science Foundation of China (Grant No. 12561071) and X. Zhang is partially supported by the National Science Foundation (Grant No. DMS-2110833).}
	\date{}
	\maketitle
	\begin{abstract}
		In this paper, we study the structural relationship between immersed finite element (IFE) spaces for scalar elliptic and Stokes interface problems on unfitted meshes. Although IFE methods have been widely developed for individual partial differential equation models, the algebraic connection between scalar and vector IFE spaces has not been systematically explored. We establish a precise unisolvence relationship between the immersed Crouzeix–Raviart (CR) element for elliptic interface problems and the immersed CR-$P_0$ element for Stokes interface problems in both gradient and stress formulations. By analyzing the block structure of the local IFE matrices, we show that the determinant of the Stokes IFE matrix factorizes in terms of the determinant of the corresponding elliptic IFE matrix. This factorization reveals an intrinsic algebraic link between the two classes of immersed elements and explains the unisolvence of the Stokes IFE spaces through the scalar elliptic case. The same result extends naturally to three dimensions and significantly simplifies the corresponding analysis. Numerical experiments confirm optimal convergence rates for both velocity and pressure in three-dimensional Stokes interface problems. 

	\end{abstract}
    \noindent{\bf Keywords:} Immersed finite element; Stokes interface problems; Crouzeix–Raviart element; Unisolvence; Determinant factorization
    
	\section{Introduction}
    Let $\Omega \subset \mathbb{R}^d$ ($d=2,3$) be an open bounded domain divided by a smooth interface $\Gamma$ into two disjoint subdomains $\Omega^+$ and $\Omega^-$ such that $\overline{\Omega} = \overline{\Omega^+ \cup \Gamma\cup\Omega^-}$. Let $\mu(\bfx)$ be a piecewise constant function defined by
	\begin{equation}
		\mu(\bfx) = 
		\begin{cases} 
			\mu^+ & \text{in } \Omega^+, \\
			\mu^- & \text{in } \Omega^-,
		\end{cases}
	\end{equation}
    with $\mu^\pm>0$. 
We consider the following three interface problems. \\

\noindent\textbf{(P0): Scalar elliptic interface problem}
\begin{subequations}
   \begin{align}
	- \nabla \cdot(\mu(\bfx)\nabla u)
	&=f ~~\text{~in~} \Omega^+ \cup \Omega^-, \label{eq:Elliptic} \\
	\jump{u}_\Gamma &= 0 ~~\text{~on~} \Gamma,
	\label{eq:Elliptic-Continuity-1} \\
	\jump{\mu\nabla u\cdot \bfn}_\Gamma &= 0 ~~\text{~on~} \Gamma, 
	\label{eq:Elliptic-Continuity-2} \\
	u &= 0 ~~\text{~on~} \partial\Omega. \label{eq:Elliptic-BoundaryCondition}
\end{align} 
\end{subequations}

\noindent\textbf{(P1): The Stokes interface problem in gradient (Laplacian) form} 
\begin{subequations}\begin{align}
-\mu \Delta\bfu + \nabla p 
&=\bff ~~\text{~in~} \Omega^+ \cup \Omega^-, \label{eq:Stokes-grad-1} \\
\nabla \cdot \bfu &= 0 ~~\text{~in~} \Omega, \label{eq:Stokes-grad-2} \\
\jump{\bfu}_\Gamma &= \mathbf{0} ~~\text{~on~} \Gamma,
\label{eq:Stokes-Continuity-grad-1} \\
\jump{(\mu\nabla\bfu-p\mathbb{I})\bfn}_\Gamma &= \mathbf{0} ~~\text{~on~} \Gamma, \label{eq:Stokes-Continuity-grad-2} \\
\bfu &= \mathbf{0} ~~\text{~on~} \partial\Omega. \label{eq:BoundaryCondition-grad}
\end{align}
\end{subequations}

\noindent\textbf{(P2): The Stokes interface problem in stress (symmetric gradient) form} 
\begin{subequations}\begin{align}
-\nabla \cdot \sigma(\bfu,p) 
&=\bff ~~\text{~in~} \Omega^+ \cup \Omega^-, \label{eq:Stokes-epsi-1} \\
\nabla \cdot \bfu &= 0 ~~\text{~in~} \Omega, \label{eq:Stokes-epsi-2} \\
\jump{\bfu}_\Gamma &= \mathbf{0} ~~\text{~on~} \Gamma,
\label{eq:Stokes-Continuity-epsi-1} \\
\jump{\sigma(\bfu, p)\bfn}_\Gamma &= \mathbf{0} ~~\text{~on~} \Gamma, \label{eq:Stokes-Continuity-epsi-2} \\
\bfu &= \mathbf{0} ~~\text{~on~} \partial\Omega, \label{eq:BoundaryCondition-epsi}
\end{align}
\end{subequations}
where $\sigma$ is the stress tensor defined by
\begin{equation}\label{eq:Relationship-grad-epsi}
    \sigma(\bfu,p) = 2\mu \varepsilon(\bfu) - p\mathbb{I},~~~
    \varepsilon(\bfu) = \frac{1}{2}(\nabla \bfu + (\nabla \bfu)^T).
\end{equation}

In the Stokes interface problems (P1) and (P2), $\bfu$ and $p$ denote the velocity and pressure, respectively. Here $\mathbb{I}$ is the identity matrix, and $\bfn$ denotes the unit normal vector on $\Gamma$ pointing from $\Omega^+$ to $\Omega^-$. For a vector-valued function $\bfv$, the jump across the interface is defined by 
\[\jump{\bfv}_\Gamma := \bfv^+|_\Gamma - \bfv^-|_\Gamma\] 
where $\bfv^\pm := \bfv|_{\Omega^\pm}$. The same notation is used for scalar and matrix-value functions. 

Although (P1) and (P2) are equivalent within each subdomain under the incompressibility constraint, they differ in their interface jump conditions. The stress formulation (P2) is physically more natural, whereas the gradient formulation (P1) is often more convenient for analysis. Clarifying the relation between these two formulations is important for the structural analysis of immersed finite element spaces.


    Elliptic and Stokes interface problems arise in multi-material and multi-phase systems, including incompressible two-phase flows, composite materials, and porous media applications. Across the interface, discontinuous coefficients lead to reduced regularity of solutions, often producing non-smooth behavior such as kinks or singularities. Standard finite element methods typically require interface-fitted meshes in order to achieve optimal convergence \cite{Babuska2000, Xu1982, chen1998finite}. However, generating such meshes can be costly when the interface geometry is complicated or evolving.
    
    Immersed finite element (IFE) methods overcome this difficulty by allowing interface to cut through a background mesh that is not aligned with $\Gamma$. 
    The key idea is to modify the local shape functions on interface elements so that the jump conditions are built into the approximation space, while the mesh itself remains unchanged. Since Li introduced the IFE method for one-dimensional elliptic problems in \cite{Li1998}, the method has been extensively developed for higher-dimensional elliptic interface problems; see for example    \cite{2018AdjeridBenromdhaneLin,2023AdjeridBabuskaGuoLin,He2019a, 2004LiLinLinRogers,2021GuoZhang}. Various conforming and nonconforming IFE spaces have also been constructed and analyzed; see \cite{Guo2019a,Lin2015a,Vallaghe2010, 2019LinSheenZhang, 2020GuoLin}. In particular, Guo and Lin \cite{Guo2019c} developed a unified framework in two dimensions covering conforming linear, nonconforming Crouzeix–Raviart (CR), bilinear ($Q_1$), and nonconforming rotated-$Q_1$ elements,
    \textcolor{black}{
    the unisolvence result is obtained based on Cartesian triangular meshes; Ji et al. \cite{ji2023analysis} generalized the results of nonconforming CR and rotated-$Q_1$ to general interface-unfitted meshes. 
    The three-dimensional nonconforming CR element was analyzed in \cite{ji2023immersed}, the unisolvence of basis functions was also derived on arbitrary tetrahedrons without any angle restrictions.}
    
    For Stokes interface problems, the construction of IFE space is more challenging due to the coupling between velocity and pressure and the incompressibility constraint. Adjerid et al. \cite{Adjerid2015} proposed an immersed $Q_1$-$Q_0$ discontinuous Galerkin method and proved optimal convergence. Later, Jones and Zhang \cite{JonesZhang2021} developed nonconforming CR-$P_0$ IFE spaces for two-dimensional Stokes interface problems and subsequently extended the approach to solve Navier-Stokes equations \cite{2022WangZhangZhuang}. Ji et al. \cite{ji2022immersed} proved the unisolvence of the corresponding CR-$P_0$ IFE basis functions on arbitrary triangles and established optimal approximation capabilities. High-order Taylor-Hood IFE spaces were developed and analyzed in \cite{2021ChenZhang, 2024ChenZhang}. 
      \textcolor{black}{A mini IFE method on Cartesian meshes was also presented in \cite{ji2025mini}, and the corresponding theory including the unisolvence was established for both two- and three-dimensions.}

    Despite the substantial literature on IFE spaces for individual PDE models, especially elliptic and Stokes interface problems, the algebraic relationship between the corresponding scalar and vector IFE spaces has not been systematically studied. In this paper, we investigate the structural relationship between the immersed CR-$P_0$ element for the Stokes systems (P1)-(P2) and the immersed CR element for the scalar elliptic interface problem (P0). This connection clarifies the algebraic foundation of Stokes IFE spaces, reveals determinant factorizations linking scalar and vector construction. Moreover, it simplifies the analysis in higher dimensions, particularly for the three-dimensional CR-$P_0$ immersed element.

    We establish a unified framework connecting scalar elliptic and Stokes IFE spaces in two steps. First, we relate the elliptic problem (P0) to Stokes problem in gradient form (P1) through their common algebraic structure of the gradient terms, which yields a determinant factorization of the local IFE matrix. Second, we bridge the stress formulation (P2) to (P1) by using the identity between the Laplacian and the divergence of the stress tensor under incompressibility. This approach leads to the structural chain
    \begin{center}
        Elliptic (P0) $\longrightarrow$ Stokes (P1) $\longrightarrow$  Stokes (P2)
    \end{center}
    through which the unisolvence of the Stokes immersed spaces can be analyzed in terms of scalar elliptic building blocks. We further develop the CR-$P_0$ immersed element spaces for three-dimensional Stokes interface problems and show that its unisolvence can be reduced to the scalar elliptic case likewise. 
    
    Our main result shows that the determinant of the local Stokes IFE matrix admits a factorization in terms of the scalar elliptic IFE matrix. More precisely
    \begin{equation}
    \det \big(M_1(\mu^+,\mu^-)\big) = \det \big(M_0(\mu^+,\mu^-)\big)^{d-1} \det \big(M_0(1,1)\big), \qquad d = 2,3. 
    \end{equation}
    and
    \begin{equation}
    \det \big(M_2(\mu^+,\mu^-)\big) = \det \big(M_1(\mu^+,\mu^-)\big).
    \end{equation} 
    Consequently, the unisolvence of the Stokes IFE spaces follows directly from the corresponding scalar elliptic result.

	The remainder of the paper is organized as follows. In Section \ref{Sec:CR-P0}, we recall the immersed CR element for elliptic interface problems and the CR-$P_0$ IFE spaces for Stokes interface problems. Section \ref{Sec:Unisolvence} presents the determinant factorization and the corresponding unisolvence analysis in two dimensions. In Section \ref{sec: 3D}, we develop the CR-$P_0$ IFE spaces for three-dimensional Stokes interface problems and establish their unisolvence by linking them to the scalar elliptic CR immersed element. Section \ref{Sec:NumericalResults} reports numerical experiments for three-dimensional Stokes interface problems. Finally, Section \ref{Sec:Conclusions} contains concluding remarks.

\section{IFE Spaces for Elliptic and Stokes Interface Problems}
\label{Sec:CR-P0}
In this section, we recall the immersed finite element spaces for the scalar elliptic interface problem (P0) and the Stokes interface problems (P1)-(P2) in two dimensions.

\subsection{Preliminaries}
Let $\mathcal{T}_h$ be a shape-regular triangulation of $\Omega$, not necessarily aligned with the interface $\Gamma$. An element $T\in\mathcal{T}_h$ is called an interface element if $T\cap \Gamma \ne \emptyset$; otherwise, it is called a non-interface element. The sets of interface elements and non-interface elements are denoted by $\mathcal{T}_h^i$ and $\mathcal{T}_h^n$, respectively.  For each element $T \in \mathcal{T}_h$, let 
$h_T = \text{diam}(T)$ and define $h:=\max_{T\in\mathcal{T}_h}h_T$. 

Let $\mathcal{F}_h$ denote the set of all faces of the mesh, namely, edges in two dimensions and faces in three dimensions. Let $\mathcal{F}_h^o$ and $\mathcal{F}_h^b$ be the sets of interior and boundary faces, respectively. For each face $F \in \mathcal{F}_h$, define its diameter by $h_F$.
The sets of interface and non-interface faces are denoted by $\mathcal{F}_h^i$ and $\mathcal{F}_h^n$, respectively.

As in \cite{2004LiLinLinRogers,Lin2015a}, we assume the following conditions on the mesh-interface configuration:
\begin{description}
    \item[(H1)]  The interface $\Gamma$ intersects at most two edges of any triangle. 
    \item[(H2)]  Each edge intersects the interface at most once, unless the edge lies on the interface. 
\end{description}

More specifically, let $T$ be an interface triangle with vertices $A_i$, $i=1,2,3$. Assume that $\Gamma$ intersects $T$ at two points, denoted by $D$ and $E$. We use the line segment $\Gamma_{h,T} := \overline{DE}$ to approximate the actual interface $\Gamma_T = \Gamma \cap T$. The element $T$ is subdivided by $\Gamma_{h,T}$ into two sub-elements \textcolor{black}{$T_h^+$ and $T_h^-$}. See Figure \ref{fig:InterfaceTriangle} for an illustration. 
\textcolor{black}{Given a function $v$, the jump across the approximate interface $\Gamma_{h,T}$ is defined by 
\[\jump{v}_{\Gamma_{h,T}} := v^+|_{\Gamma_{h,T}}- v^-|_{\Gamma_{h,T}}\] 
with $v^\pm:=v|_{T_h^\pm}$. Let $\bar{\bfn}=(\bar{n}_1, \bar{n}_2)^T$ be the unit normal vector of $\Gamma_{h,T}$ pointing toward $T_h^-$. 
According to the approximate interface $\Gamma_{h,T}$, we let $\mu_h(\bfx)|_{T_h^{\pm}}=\mu^{\pm}$}.

\textcolor{black}{On each interior face $F = \partial T_1 \cap \partial T_2$ with $T_1, T_2 \in \mathcal{T}_h$, let $\bfn_F$ be the unit normal vector of $F$ pointing toward $T_2$. For
a piecewise smooth function $v$, we define the jump and average across the face $F$ by 
\[
[v]_F := v|_{T_1} - v|_{T_2},~~~\{v\}_F := \frac12(v|_{T_1} + v|_{T_2}).
\]
If $F \in \mathcal{F}_h^b$, then $\bfn_F$ denotes the unit normal vector of $F$ outward to $\Omega$, and we 
define $[v]_F := v$ and $\{v\}_F := v$. 
For vector or matrix-valued functions, the notations $[\cdot]_F$, $\{\cdot\}_F$ and $\jump{\cdot}_{\Gamma_{h,T}}$ are defined analogously. 
}

\begin{figure}[htbp]
  \centering
  \includegraphics[scale=1]{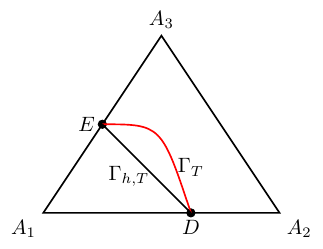}
  \caption{An interface triangular element. The red curve $\Gamma_T$ denotes the actual interface, and the line segment $\Gamma_{h,T}$ is its linear approximation inside the element.}
  \label{fig:InterfaceTriangle}
\end{figure}

\subsection{Scalar CR IFE space for (P0)}
We recall the Crouzeix–Raviart (CR) immersed finite element space for the elliptic interface problem. 

On a non-interface element $T \in \mathcal{T}_h^n$, the local CR space $S_{h0}^n(T)$ coincides with $P_1(T)$, the space of linear polynomials. The local degrees of freedom $N_{i,T}$ 
are given by the edge averages on $T$, i.e., 
\begin{equation}\label{eq:scalar-dof}
	N_{i,T}(v) = \frac{1}{|F_i|} \int_{F_i} v ds, \quad i=1,2,3.
\end{equation} 

On an interface element $T\in \mathcal{T}_h^i$, the local IFE space consists of piecewise linear functions defined separately on \textcolor{black}{$T_h^+$ and $T_h^-$}, which satisfies the interface jump conditions
\begin{subequations}
\begin{align}
	\jump{v}_{\Gamma_{h,T}} = 0, 
	\label{eq:Continuity-scalar-1}\\
	\jump{\textcolor{black}{\mu_h\nabla v\cdot\bar{\bfn}}}_{\Gamma_{h,T}} = 0.   
	\label{eq:Continuity-scalar-2}
\end{align}
\end{subequations}

The local IFE space on an interface element $T$ is defined by
\begin{equation*}
	S_{h0}^i(T) = 
	\{v: v|_{\textcolor{black}{T_h^{\pm}}}=v^{\pm}, ~v^{\pm}\in P_1(\textcolor{black}{T_h^\pm)}, \text{~and~} v \text{~satisfies~} \eqref{eq:Continuity-scalar-1}-\eqref{eq:Continuity-scalar-2}\}.
\end{equation*}
The global nonconforming CR IFE space for the elliptic interface problem (P0) is defined by
\begin{equation}
	 S_{h0}(\mathcal{T}_h) = 
	\left\{ v : v|_T \in S_{h0}^n(T) \ \forall T \in \mathcal{T}_h^n,\ 
	            v|_T \in S_{h0}^i(T) \ \forall T \in \mathcal{T}_h^i,
	~\text{and} \int_F \textcolor{black}{[v]_F} ds = 0\ \forall F \in \mathcal{F}_h^o\right\}.
\end{equation}

\subsection{Vector CR-$P_0$ IFE spaces for (P1)-(P2)}

Next, we construct CR-$P_0$ immersed finite element spaces for the Stokes interface problems (P1) and (P2). The velocity is approximated by nonconforming CR element, while the pressure is approximated by piecewise constants. 

For a non-interface element $T \in \mathcal{T}_h^n$, we use the standard CR-$P_0$ pair
\[\bfS_h^n(T)=P_1(T)^2\times P_0(T).\]
The degrees of freedom for velocity are the edge averages of each component, and the degree of freedom for the pressure is the element average. More precisely, 
\begin{equation*}
N_{i,T}(\bfv,q) = \frac{1}{|F_i|} \int_{F_i} v_1 ds,\ \quad N_{i+3,T}(\bfv,q) = \frac{1}{|F_i|} \int_{F_{i}} v_2 ds, \qquad i =1,2,3,
\end{equation*}
\begin{equation*}
\end{equation*}
and 
\begin{equation*}
N_{7,T}(\bfv,q)=\frac{1}{|T|} \int_T q d\bfx,
\end{equation*}
where $\bfv=(v_1, v_2)^T$. 
Therefore, the total number of local degrees of freedom is $7$.

For coupled systems such as the Stokes problems (P1) and (P2), the velocity and pressure are linked through the interface jump conditions. Accordingly, the local IFE spaces must be defined in a coupled manner. Let $\bfv^{\pm}\in P_1(\textcolor{black}{T_h^\pm)^2}$ and $q^{\pm}\in P_0(\textcolor{black}{T_h^\pm})$. We impose the following discrete interface jump conditions.

For the gradient formulation  (P1), we impose  
\begin{align}
  \jump{\bfv}_{\Gamma_{h,T}} = 0, 
     \label{eq:Continuity-grad-1}\\
  \jump{(\textcolor{black}{\mu_h\nabla \bfv-q\mathbb{I})\bar\bfn}}_{\Gamma_{h,T}} = 0,            \label{eq:Continuity-grad-2}\\
  \jump{\nabla \cdot \bfv}_{\Gamma_{h,T}} = 0.
     \label{eq:Continuity-grad-3}
\end{align}
The corresponding local IFE space is denoted by 
\begin{equation}\label{eq: local space 1}
\mathbf{S}_{h1}^i(T) = 
\{(\bfv, q): \bfv^{\pm}\in P_1(\textcolor{black}{T_h^\pm)^2},  q^{\pm}\in P_0(\textcolor{black}{T_h^\pm}), \text{~and~}  (\bfv, q) \text{~satisfies~} \eqref{eq:Continuity-grad-1}-\eqref{eq:Continuity-grad-3} \}.
\end{equation}

For the Stokes system in stress formulation {(P2)}, we replace the interface condition \eqref{eq:Continuity-grad-2} with
\begin{equation}\label{eq:Continuity-epsi-2}
     \jump{\textcolor{black}{(2\mu_h\varepsilon(\bfv)-q\mathbb{I})\bar\bfn}}_{\Gamma_{h,T}} = 0.
\end{equation}
This defines the local IFE space 
\begin{equation}\label{eq: local space 2} 
\mathbf{S}_{h2}^i(T) = 
\{(\bfv, q): \bfv^{\pm}\in P_1(\textcolor{black}{T_h^\pm)^2},  q^{\pm}\in P_0(\textcolor{black}{T_h ^\pm}), \text{~and~} (\bfv, q) \text{~satisfies~} \eqref{eq:Continuity-grad-1}, \eqref{eq:Continuity-epsi-2},~ \text{and} ~\eqref{eq:Continuity-grad-3} \}.
\end{equation}

The local IFE spaces $\mathbf{S}_{h1}^i(T)$ and $\mathbf{S}_{h2}^i(T)$
are different because they are defined using different traction jump conditions. 

The global CR-$P_0$ IFE spaces for the Stokes problems (P1) and (P2) are defined by
\begin{equation}\label{eq: global space}
\begin{split}
    \bfS_{hj}(\mathcal{T}_h) &= 
\bigg\{ (\bfv, q) : (\bfv, q)|_T \in \bfS_h^n(T) \ \forall T \in \mathcal{T}_h^n,\ 
(\bfv, q)|_T \in \bfS_{hj}^i(T) \ \forall T \in \mathcal{T}_h^i,\\
&\hspace{40mm}  \text{~and~} \int_F \textcolor{black}{[\bfv]_F} ds = 0\ \forall F \in \mathcal{F}_h^o\bigg\}\qquad j=1,2.  
\end{split}
\end{equation}

\section{Unisolvence Relationships in Two Dimensions}
\label{Sec:Unisolvence}

In this section, we establish the algebraic relationship between the scalar CR immersed finite element for the elliptic interface problem and the CR-$P_0$ immersed finite element for the Stokes interface problems in two dimensions. The key tool is a block factorization of the local IFE matrices.

\subsection{Unisolvence of Scalar CR IFE Functions for Elliptic Interface Problem (P0)}
As stated in Section 2.2, the local IFE shape functions on an interface element $T$ have the following piecewise linear form:
\begin{equation}\label{eq:scalar-function-2D-1}
  	v|_{\textcolor{black}{T_h^\pm}}(x,y)=a^{\pm}+b^{\pm}x+c^{\pm}y.
    \end{equation}   
The function satisfies the interface jump conditions \eqref{eq:Continuity-scalar-1} and \eqref{eq:Continuity-scalar-2}. 
The following unisolvence result is well known for two-dimensional elliptic interface problems; see \textcolor{black}{\cite{Guo2019c,ji2023analysis}}. 
  \begin{lemma}\label{lemma:Ellipitc-unisolvency}
  	The nonconforming IFE shape functions of the form \eqref{eq:scalar-function-2D-1} are uniquely determined by the prescribed edge values \eqref{eq:scalar-dof} together with the jump conditions \eqref{eq:Continuity-scalar-1}-\eqref{eq:Continuity-scalar-2}, regardless of the interface location and the coefficient jump.
  \end{lemma}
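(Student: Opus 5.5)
Since the number of unknowns (six coefficients $a^\pm,b^\pm,c^\pm$) equals the number of conditions (three continuity conditions from \eqref{eq:Continuity-scalar-1}–\eqref{eq:Continuity-scalar-2} and three edge averages \eqref{eq:scalar-dof}), it suffices to show that the homogeneous problem has only the trivial solution. First I will reduce the jump conditions to a one-parameter family. Because $v^+-v^-$ is linear and vanishes on the line through $D$ and $E$, we have $v^+-v^- = \alpha\,\ell(\bfx)$ with $\ell(\bfx)=(\bfx-D)\cdot\bar\bfn$. The flux condition then gives $\mu^+\nabla v^+\cdot\bar\bfn=\mu^-\nabla v^-\cdot\bar\bfn$. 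Writing $\beta:=\nabla v^-\cdot\bar\bfn$, we get $\alpha=(\mu^-/\mu^+-1)\beta$. Hence every IFE function has the form
\begin{equation*}
v = w + \Big(\frac{\mu^-}{\mu^+}-1\Big)(\nabla w\cdot\bar\bfn)\,\ell^+(\bfx),\qquad w:=v^-\in P_1(T),
\end{equation*}
where $\ell^+=\ell$ on $T_h^+$ and $\ell^+=0$ on $T_h^-$. This parametrizes $S_{h0}^i(T)$ by the three coefficients of $w$, so the task reduces to proving that the resulting $3\times 3$ map $w\mapsto (N_{i,T}(v))_{i=1}^3$ is injective.

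Next, label the vertices so that $A_1\in T_h^+$ and $A_2,A_3\in T_h^-$; the opposite case is symmetric after swapping $\mu^\pm$. The function $\ell^+$ is then supported in the small triangle $A_1DE$, with $D\in A_1A_2$ and $E\in A_1A_3$. Write $w=\sum_i w_i\lambda_i$ in barycentric coordinates, or equivalently in terms of its CR edge averages. The edge averages of $\ell^+$ are explicit: on edge $A_2A_3$ it is zero, and on $A_1A_2$ it equals $\tfrac{|A_1D|}{2|A_1A_2|}\,\ell(A_1)$, with the analogous formula on $A_1A_3$. The conditions $N_{i,T}(v)=0$ then become $N_i(w)=-\kappa\,(\nabla w\cdot\bar\bfn)\,N_i(\ell^+)$, where $\kappa=\mu^-/\mu^+-1$. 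Since $w$ is determined by its CR averages, I set $w=-\kappa\,(\nabla w\cdot\bar\bfn)\,\Pi\ell^+$, where $\Pi$ is the CR interpolant. So either $\nabla w\cdot\bar\bfn=0$, which forces $w=0$, or $w$ is a multiple of $\Pi\ell^+$. In the latter case, taking the normal derivative yields the scalar condition
\begin{equation*}
1+\kappa\,\nabla(\Pi\ell^+)\cdot\bar\bfn = 0 ,
\end{equation*}
and unisolvence is equivalent to showing that this never holds. Equivalently, the determinant equals $1+\kappa\,\nabla(\Pi\ell^+)\cdot\bar\bfn$ up to a nonzero factor.

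The main obstacle is proving that $1+\kappa\,\nabla(\Pi\ell^+)\cdot\bar\bfn\neq 0$ for every $\kappa>-1$ and every geometry. Since this expression is affine in $\kappa$, it suffices to show $0\le \nabla(\Pi\ell^+)\cdot\bar\bfn\le 1$: then at $\kappa=-1$ the value is at least $0$, at $\kappa\to\infty$ it is nonnegative, and strict positivity holds for $\kappa\in(-1,\infty)$. To get these bounds, I will compute $\Pi\ell^+ = t_2\,\ell(A_1)\,\phi_3^{CR}+t_3\,\ell(A_1)\,\phi_2^{CR}$, where $t_j=|A_1D|/|A_1A_j|$ (or similar) lie in $[0,1]$, using $\nabla\phi^{CR}_j=-2\nabla\lambda_j$. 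I will then express $\nabla\lambda_j\cdot\bar\bfn$ through $\ell(A_j)-\ell(A_1)$. Because $\ell(A_1)>0$ and $\ell(A_2),\ell(A_3)\le 0$, and because $D,E$ divide the edges in the ratios $t_j=\ell(A_1)/(\ell(A_1)-\ell(A_j))$, the quantity reduces to a sum like $t_2t_3$-type terms lying in $[0,1]$. I expect the algebra to collapse to something like $\nabla(\Pi\ell^+)\cdot\bar\bfn = t_2+t_3-\ldots$, possibly with a cross term. Verifying the sign bounds there, without angle restrictions, is the delicate step; if the direct computation is messy, I would instead use $\int_T\nabla(\Pi\ell^+)\cdot\bar\bfn=\int_{\partial T}\ell^+\,\bfn\cdot\bar\bfn$ together with the CR property of preserving edge averages to obtain the bounds geometrically.
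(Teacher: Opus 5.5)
The paper does not prove this lemma; it cites Guo--Lin (Cartesian triangular meshes) and Ji et al.\ (general triangles). Your argument is a correct, self-contained alternative that reduces the problem to a single scalar. The parametrization $v=w+\kappa(\nabla w\cdot\bar{\bfn})\ell^+$ with $\kappa=\mu^-/\mu^+-1>-1$ is right. So are the reduction to $w=-\kappa(\nabla w\cdot\bar{\bfn})\Pi\ell^+$ and the criterion $1+\kappa\gamma\neq 0$, where $\gamma:=\nabla(\Pi\ell^+)\cdot\bar{\bfn}$.

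You leave the value of $\gamma$ open, but your fallback settles it in one line. Three facts are needed: $\bfn\cdot\bar{\bfn}$ is constant on each edge; $\Pi$ preserves edge averages; and $\ell^+$ is continuous and piecewise linear, with $\nabla\ell^+=\bar{\bfn}$ on $T_h^+$ and $0$ on $T_h^-$. Hence
\[
|T|\,\gamma=\int_{\partial T}\Pi\ell^+\,\bfn\cdot\bar{\bfn}\,ds=\int_{\partial T}\ell^+\,\bfn\cdot\bar{\bfn}\,ds=\int_T\nabla\ell^+\cdot\bar{\bfn}\,d\bfx=|T_h^+|,
\]
so $1+\kappa\gamma=(\mu^+|T_h^-|+\mu^-|T_h^+|)/(\mu^+|T|)>0$. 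This covers every cut configuration at once, including $T_h^+$ being the quadrilateral or $\Gamma_{h,T}$ passing through a vertex. The vertex labeling, the $\mu^\pm$ swap, and the affine-in-$\kappa$ argument are therefore unnecessary.

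Your barycentric route also closes, but the result is exactly $\gamma=t_2t_3=|A_1DE|/|T|$, not $t_2+t_3-\cdots$. The individual quantities $\nabla\lambda_j\cdot\bar{\bfn}$ are not functions of the $\ell(A_j)$ alone; they depend on the tangential positions of the vertices. What works is to take
\[
\bar{\bfn}=\nabla\ell=\sum_{j=2,3}(\ell(A_j)-\ell(A_1))\nabla\lambda_j=-\ell(A_1)\sum_{j=2,3}t_j^{-1}\nabla\lambda_j,
\]
dot it with $\bar{\bfn}$, and multiply by $t_2t_3$; this gives $-\ell(A_1)(t_3\nabla\lambda_2+t_2\nabla\lambda_3)\cdot\bar{\bfn}=t_2t_3$.

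There are two slips in that route:
\begin{enumerate}
\item The coefficients of $\Pi\ell^+$ need the factor $\tfrac12$ you computed for the edge averages. Without it you would get $2t_2t_3$, which can exceed $1$ and breaks your bound.
\item With the paper's orientation ($\bar{\bfn}$ points into $T_h^-$), $\ell(A_1)<0$, not $>0$. This is harmless, since $\ell(A_1)$ cancels.
\end{enumerate}

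Compared with the citation, your route is elementary and needs no angle conditions. It also gives an explicit formula: a Schur complement with respect to the standard CR block (the first three columns of $A$) yields
\[
\det M_0(\mu^+,\mu^-)=\det M_0(1,1)\,\frac{\mu^+|T_h^-|+\mu^-|T_h^+|}{|T|}.
\]
Combined with Theorem~\ref{thm:det-2d}, this makes $\det M_1$ fully explicit.
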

  
Under the continuity condition \eqref{eq:Continuity-scalar-1}, the IFE function in \eqref{eq:scalar-function-2D-1} can be written in the form
 \begin{equation}
 v(x,y)=\left\{
\begin{array}{ll}
 a+bx+cy+mL(x,y), &\text{~if~}(x,y)\in \textcolor{black}{T_h^+},\\
 a+bx+cy, &\text{~if~}(x,y)\in \textcolor{black}{T_h^-},\\
\end{array}
\right.
 \label{eq:scalar-function-2D-2}
 \end{equation}
where $L(x,y) = \textcolor{black}{\bar n}_1(x-x_D)+\textcolor{black}{\bar n}_2(y-y_D)$ is the signed-distance linear function associated with the interface approximation $\Gamma_{h,T}$, and $(x_D,y_D)$ denotes the coordinate of the intersection point $D$.

The {coefficient vector} 
\[\mathbf{c} = (a,b,c,m)^T\]  
is uniquely determined by the degree of freedom \eqref{eq:scalar-dof} together with the flux jump condition \eqref{eq:Continuity-scalar-2}. More specifically, the resulting linear system can be written as 
\[M_0(\mu^+,\mu^-) \mathbf{c} =\mathbf{w},\]
where $M_0(\mu^+,\mu^-)\in\mathbb{R}^{4\times 4}$ is the local coefficient matrix  and $\mathbf{w}$ collects the prescribed edge averages. 

The matrix $M_0(\mu^+,\mu^-)$ can be decomposed as  
\[M_0(\mu^+,\mu^-) = \left(\begin{array}{c} A\\C(\mu^+,\mu^-)\end{array}\right),\]
where $A\in\mathbb{R}^{3\times 4}$ corresponds to the edge-average degrees of freedom and is
independent of the coefficients $\mu^\pm$, while $C\in\mathbb{R}^{1\times 4}$ arises from the flux jump condition and therefore depends on $\mu^\pm$. More precisely,
\begin{equation}\label{eq: matrix C 2D}
C(\mu^+,\mu^-) =
\begin{pmatrix}
0, & (\mu^+-\mu^-)\textcolor{black}{\bar n}_1, & (\mu^+-\mu^-)\textcolor{black}{\bar n}_2, & \mu^+\\
\end{pmatrix}.    
\end{equation}
Thus, the coefficient matrix $M_0$ depends on $\mu^\pm$ only through the last row $C(\mu^+,\mu^-)$, while the block $A$ is purely geometric. 

The following lemma is a direct consequence of the unisolvence result in \textcolor{black}{\cite{Guo2019c,ji2023analysis}} and will  serve as the key building block for the Stokes analysis. 
\begin{lemma}\label{lemma:Scalar-Unisolvence-2D}
For any admissible interface configuration and any $\mu^\pm>0$, 
\[\det\big(M_0(\mu^+,\mu^-)\big)\neq 0.\]
Hence, the scalar CR immersed finite element is unisolvent.
\end{lemma}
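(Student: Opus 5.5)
The plan is to derive the nonvanishing of $\det(M_0(\mu^+,\mu^-))$ directly from Lemma \ref{lemma:Ellipitc-unisolvency}, using the standard equivalence between unisolvence and invertibility of a square coefficient matrix. Two facts make this possible. First, under the continuity condition \eqref{eq:Continuity-scalar-1}, every piecewise linear function \eqref{eq:scalar-function-2D-1} can be written in the form \eqref{eq:scalar-function-2D-2}, and this representation is unique. Second, the system $M_0(\mu^+,\mu^-)\mathbf{c}=\mathbf{w}$ has four equations in the four unknowns $(a,b,c,m)$.

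The argument proceeds in four steps.

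\emph{Step 1: the parametrization is a bijection.} The jump $v^+-v^-$ is a linear polynomial vanishing on the segment $\Gamma_{h,T}=\overline{DE}$. Since $D\neq E$, it must be a scalar multiple of $L(x,y)$. Hence the map $\mathbf{c}=(a,b,c,m)^T\mapsto v$ is a linear bijection from $\mathbb{R}^4$ onto the space of piecewise linear functions satisfying \eqref{eq:Continuity-scalar-1}. Injectivity holds because $v\equiv0$ on $T_h^-$, which has positive area, forces $a=b=c=0$, and then $mL\equiv0$ on $T_h^+$ forces $m=0$.

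\emph{Step 2: the rows of $M_0$ encode the conditions.} The first three rows $A\mathbf{c}$ are the edge averages \eqref{eq:scalar-dof} of $v$. The last row $C(\mu^+,\mu^-)\mathbf{c}$ is the flux jump \eqref{eq:Continuity-scalar-2}, which is a linear functional of $\mathbf{c}$. Note that $\nabla L=\bar{\bfn}$ with $|\bar{\bfn}|=1$, so this row is $\mu^+((b,c)\cdot\bar{\bfn}+m)-\mu^-(b,c)\cdot\bar{\bfn}$, up to sign. This matches \eqref{eq: matrix C 2D}.

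\emph{Step 3: contradiction argument.} Suppose $\det(M_0)=0$. Then there is a nonzero $\mathbf{c}$ with $M_0\mathbf{c}=\mathbf{0}$. By Step 1, the corresponding $v$ is a nonzero function in $S_{h0}^i(T)$ whose three edge averages all vanish. The zero function has the same degrees of freedom, which contradicts the uniqueness asserted in Lemma \ref{lemma:Ellipitc-unisolvency}. Therefore $\det(M_0(\mu^+,\mu^-))\ne0$ for every admissible configuration satisfying (H1)--(H2) and every $\mu^\pm>0$.

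\emph{Step 4: conclusion.} Conversely, invertibility gives existence of a shape function for every $\mathbf{w}$, so the element is unisolvent.

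There is no real computational obstacle, since the substance lies in the cited result of \cite{Guo2019c,ji2023analysis}. The only delicate point is Step 1: we must confirm that the reduction to \eqref{eq:scalar-function-2D-2} neither loses nor duplicates functions. Otherwise uniqueness in the original variables would not transfer to injectivity of $M_0$. This relies on $\Gamma_{h,T}$ being a nondegenerate segment and on both $T_h^\pm$ having positive measure, which (H1)--(H2) guarantee; if one subelement degenerates, I would treat that case as the limiting non-interface element, where $M_0$ reduces to the standard invertible CR matrix augmented by the row $(0,0,0,\mu^+)$.
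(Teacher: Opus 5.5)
Your proposal is correct and takes essentially the same route as the paper, which gives no separate argument and simply states the lemma as a direct consequence of the cited unisolvence result (Lemma~\ref{lemma:Ellipitc-unisolvency}); your Steps 1--3 spell out the standard equivalence between uniqueness of the shape function and injectivity of the square matrix $M_0(\mu^+,\mu^-)$, including the check that the reparametrization \eqref{eq:scalar-function-2D-2} is a bijection. Only your closing remark on degenerate subelements is imprecise, and it does not affect the conclusion: the last row stays $C(\mu^+,\mu^-)$ rather than becoming $(0,0,0,\mu^+)$, and the determinant equals $\mu^+$ or $\mu^-$ times that of the standard CR matrix, depending on which subelement degenerates.
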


\subsection{Unisolvence of CR-$P_0$ IFE functions for Stokes Interface Problems}
We now analyze the CR-$P_0$ immersed finite element for the two-dimensional Stokes interface problems (P1) and (P2). In the local basis $(v_1,v_2,q)$, the velocity and pressure unknowns are coupled through the traction continuity condition \eqref{eq:Continuity-grad-2} for (P1). 

Due to the continuity of velocity components \eqref{eq:Continuity-grad-1}, each velocity component $v_k$ ($k=1,2$) can be written as 
\begin{equation*}
 v_k(x,y)=\left\{
\begin{array}{ll}
 a_k+b_kx+c_ky+m_kL(x,y), &\text{~if~}(x,y)\in T_h^+,\\
 a_k+b_kx+c_ky, &\text{~if~}(x,y)\in T_h^-,\\
\end{array}
\right.
 \end{equation*}
where $L$ is the signed-distance linear function associated with
$\Gamma_{h,T}$. The pressure is approximated by a piecewise constant function
\[q(x,y)=q^\pm, \text{~in~} \textcolor{black}{T_h^\pm}.\]  
As in the elliptic case, we define the unknown vector
\[
\mathbf c :=
(\mathbf c_1;\mathbf c_2;\mathbf q)
\in\mathbb{R}^{10},\]
where
\[
\mathbf c_k:=(a_k,b_k,c_k,m_k)^T\in\mathbb{R}^4,\qquad k=1,2,\]
and 
\[
\mathbf q:=(q^+,q^-)^T\in\mathbb{R}^2.
\]
Suppose we order the equations as follows:  
(i) three velocity edge-average conditions for $v_1$; 
(ii) the first component of the traction jump condition;
(iii) three velocity edge-average conditions for $v_2$;
(iv) the second component of the traction jump condition;
(v) the divergence continuity condition; and 
(vi) the pressure element-average condition. 
The local system can be written as
\begin{equation}\label{eq: stokes system 1}
    M_1(\mu^+,\mu^-) \mathbf{c} = \mathbf{w},
\end{equation}
where
\begin{equation}\label{eq: M1 2D}
M_1(\mu^+,\mu^-) = 
\begin{pmatrix}
A & 0  & 0 & 0\\
C(\mu^+,\mu^-) & 0  & -\textcolor{black}{\bar n}_1 & \textcolor{black}{\bar n}_1\\
0 & A  & 0 & 0\\
0 & C(\mu^+,\mu^-) & -\textcolor{black}{\bar n}_2 & \textcolor{black}{\bar n}_2 \\
D_1 & D_2  & 0 & 0\\
0 & 0 & \textcolor{black}{\frac{|T^+_h|}{|T|}} & \textcolor{black}{\frac{|T^-_h|}{|T|}}\\
\end{pmatrix}.
\end{equation}
We observe that the divergence jump condition \eqref{eq:Continuity-grad-3} yields 
\begin{equation*}
m_1\textcolor{black}{\bar n}_1+m_2\textcolor{black}{\bar n}_2 = 0, \label{eq:Continuity-grad-3-2Dmatrix}
\end{equation*}
and hence
\begin{equation}\label{eq: D1D2}
    D_1 = (0,0,0,\textcolor{black}{\bar n}_1)=\textcolor{black}{\bar n}_1\mathbf{e}_4^T,~~~\text{and}~~~D_2 = (0,0,0,\textcolor{black}{\bar n}_2) = \textcolor{black}{\bar n}_2\mathbf{e}_4^T. 
\end{equation}

We are now ready to state the determinant factorization for the Stokes IFE matrices. 
\begin{theorem}\label{thm:det-2d}
The determinant of the Stokes IFE matrix $M_1$ satisfies
\begin{equation}\label{eq: det M1}
    \det \big(M_1(\mu^+,\mu^-)\big)
=
\det \big(M_0(\mu^+,\mu^-)\big)\,
\det \big(M_0(1,1)\big).
\end{equation}
Consequently, if the scalar elliptic CR-IFE element is unisolvent, then the two-dimensional Stokes CR-$P_0$ immersed finite element is also unisolvent.
\end{theorem}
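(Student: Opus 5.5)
The plan is to reduce $M_1$ to a block-diagonal form by unimodular row and column operations that exploit two facts: $|T_h^+|+|T_h^-|=|T|$, and $\bar\bfn$ is a unit vector. Throughout, write $R=\begin{pmatrix}\bar n_1 & \bar n_2\\ -\bar n_2 & \bar n_1\end{pmatrix}$, which is orthogonal with $\det R=1$.

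\textbf{Step 1: remove the pressure unknowns.}
\begin{itemize}
\item Add the $q^-$ column to the $q^+$ column. The traction rows contain $-\bar n_k$ and $\bar n_k$ in these two columns, so they cancel. The last row becomes $(|T_h^+|+|T_h^-|)/|T| = 1$. The new $q^+$ column is therefore $\mathbf e_{10}$.
\item Expand along this column. This deletes the pressure-average row and one pressure column, leaving a $9\times 9$ matrix in the unknowns $(\mathbf c_1,\mathbf c_2,q^-)$.
\item In this $9\times9$ matrix, the $q^-$ column is nonzero only in the two traction rows, with entries $\bar n_1$ and $\bar n_2$.
\item Replace the two traction rows by $R$ applied to them. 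This does not change the determinant. The first new row is $(\bar n_1 C,\ \bar n_2 C,\ 1)$ and the second is $(-\bar n_2 C,\ \bar n_1 C,\ 0)$, using $\bar n_1^2+\bar n_2^2=1$.
\item Expand along the $q^-$ column, whose only nonzero entry is now the $1$. What remains is the $8\times 8$ matrix with block rows $(A,0)$, $(0,A)$, $(-\bar n_2C,\ \bar n_1C)$ and $(\bar n_1\mathbf e_4^T,\ \bar n_2\mathbf e_4^T)$. The last block row is $(D_1,D_2)$ from \eqref{eq: D1D2}.
\end{itemize}

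\textbf{Step 2: rotate the velocity variables.}
\begin{itemize}
\item Multiply this $8\times8$ matrix on the right by $R^T\otimes I_4$. This is the change of unknowns $\boldsymbol\alpha=\bar n_1\mathbf c_1+\bar n_2\mathbf c_2$ and $\boldsymbol\beta=-\bar n_2\mathbf c_1+\bar n_1\mathbf c_2$, and it has determinant $1$.
\item Under this change, the rows $(-\bar n_2C,\ \bar n_1C)$ become $(0,\ C)$ and $(\bar n_1\mathbf e_4^T,\ \bar n_2\mathbf e_4^T)$ become $(\mathbf e_4^T,\ 0)$.
\item The two $A$ block rows become $(\bar n_1A,\ -\bar n_2A)$ and $(\bar n_2A,\ \bar n_1A)$. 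Left-multiplying these six rows by $R\otimes I_3$, which also has determinant $1$, restores them to $(A,0)$ and $(0,A)$.
\end{itemize}

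\textbf{Step 3: read off the factorization.}
After reordering rows, the matrix is block diagonal with diagonal blocks $\begin{pmatrix}A\\ \mathbf e_4^T\end{pmatrix}$ acting on $\boldsymbol\alpha$ and $\begin{pmatrix}A\\ C(\mu^+,\mu^-)\end{pmatrix}=M_0(\mu^+,\mu^-)$ acting on $\boldsymbol\beta$. By \eqref{eq: matrix C 2D}, $C(1,1)=(0,0,0,1)=\mathbf e_4^T$, so the first block is exactly $M_0(1,1)$. This gives \eqref{eq: det M1}, provided the signs work out.

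\textbf{Main obstacle: sign bookkeeping.} The two cofactor expansions and the row reordering contribute signs $(-1)^{i+j}$ and permutation signs. I would track these explicitly under the stated equation ordering (i)--(vi). Alternatively, I would note that each step can be arranged so that the pivot sits in the last row and column, which avoids any sign at all. If a global sign $\pm1$ remains, it is harmless for unisolvence. Even so, I would verify it directly, for instance on the configuration $\bar\bfn=(1,0)$ where the blocks decouple trivially.

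\textbf{Consequence.} Lemma~\ref{lemma:Scalar-Unisolvence-2D} applies for all $\mu^\pm>0$, in particular at $\mu^+=\mu^-=1$. Hence both factors are nonzero, so $\det M_1\neq0$ and the Stokes CR-$P_0$ IFE is unisolvent.
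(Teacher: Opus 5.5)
Your proposal is correct, and it reaches the factorization by a genuinely different route from the paper.

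\textbf{The paper's route.} It removes only one pressure unknown: it adds the $q^+$ column to the $q^-$ column, so the pivot sits at position $(10,10)$. It then writes the $9\times 9$ minor as a bordered matrix with diagonal block $\mathrm{diag}(M_0,M_0)$ and border $(D_1,D_2)$. A Schur complement gives $\det(M_0)^2\,\mathbf e_4^TM_0^{-1}\mathbf e_4$. Finally, the cofactor formula identifies this $(4,4)$ entry of $M_0^{-1}$ as $\det M_0(1,1)/\det M_0(\mu^+,\mu^-)$.

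\textbf{Your route.} You eliminate both pressures, using $R$ to isolate the normal traction row, which is the only row still involving pressure. You then rotate the velocity unknowns so that the matrix becomes exactly $\mathrm{diag}\big(M_0(1,1),M_0(\mu^+,\mu^-)\big)$.

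\textbf{What each route buys.} Yours is a division-free polynomial identity. It therefore does not need $M_0(\mu^+,\mu^-)$ to be invertible beforehand, whereas the paper's Schur step invokes Lemma~\ref{lemma:Scalar-Unisolvence-2D} inside the proof of the identity itself. Your route also explains the two factors:
\begin{itemize}
\item The divergence row forces $\bar n_1m_1+\bar n_2m_2=0$, so $\bfv\cdot\bar\bfn$ is an ordinary linear CR function on $T$. This gives the $M_0(1,1)$ block.
\item The pressure-free tangential traction row says that $\bfv\cdot\bar{\mathbf t}$, with $\bar{\mathbf t}=(-\bar n_2,\bar n_1)^T$, is a scalar CR-IFE function with coefficients $\mu^\pm$. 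This gives the $M_0(\mu^+,\mu^-)$ block.
\end{itemize}
The argument carries over verbatim to three dimensions if $R$ is replaced by any orthogonal matrix whose first row is $\bar\bfn^T$. This yields two tangential copies of $M_0(\mu^+,\mu^-)$ with the same bookkeeping, which is more explicit than the paper's three-dimensional sketch. The paper's argument, in exchange, is shorter and needs no choice of tangential frame, since $|\bar\bfn|=1$ collapses $\sum_k D_kM_0^{-1}D_k^T$ directly.

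\textbf{The sign you left open.} It comes out right:
\begin{itemize}
\item The first expansion is at position $(10,9)$ and contributes $(-1)^{19}=-1$.
\item After the rotation, the surviving $1$ sits at position $(4,9)$ of the $9\times 9$ matrix and contributes $(-1)^{13}=-1$.
\item The matrices $R$, $R^T\otimes I_4$ and $R\otimes I_3$ all have determinant $1$.
\item Moving the row $(\mathbf e_4^T,0)$ from position $8$ to position $4$ is a $5$-cycle, hence an even permutation.
\end{itemize}
The net sign is $+1$, so you obtain the stated identity exactly, not merely up to sign.
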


\begin{proof}
Adding the second to last column to the last column of $M_1$,
we obtain a matrix whose last column is $\mathbf{e}_{10}$. Let $\widetilde M_1$ be the minor of this matrix associated with the bottom right entry. Then 
\begin{equation}\label{eq: M1 tilde}
   \widetilde M_1:=
   \begin{pmatrix}
A & 0  & 0\\
C(\mu^+,\mu^-) & 0  & -\textcolor{black}{\bar n}_1\\
0 & A   & 0\\
0 & C(\mu^+,\mu^-) & -\textcolor{black}{\bar n}_2 \\
D_1 & D_2  & 0 
\end{pmatrix} = 
\begin{pmatrix}
M_0(\mu^+,\mu^-) & 0 & -D_1^T\\
0 & M_0(\mu^+,\mu^-)  & -D_2^T\\
D_1 & D_2  & 0 
\end{pmatrix}.
\end{equation}

Since $M_0(\mu^+,\mu^-)$ is invertible, we apply the Schur complement to $M = \text{diag}(M_0,M_0)$ to have
\begin{equation}\label{eq: det M1 tilde}
    \begin{split}
        \det(\widetilde M_1) &= \det(M_0)^2\det\left(0-(D_1,D_2)M^{-1}\begin{pmatrix} -D_1^T \\ -D_2^T \end{pmatrix}\right) \\
        &=\det(M_0)^2\det(D_1M_0^{-1}D_1^T + D_2M_0^{-1}D_2^T)\\
        &=\det(M_0)^2(\mathbf{e}_4^TM_0^{-1}\mathbf{e}_4).
\end{split}
\end{equation}
The last step is due to \eqref{eq: D1D2} and the fact that $\textcolor{black}{\bar n}_1^2+\textcolor{black}{\bar n}_2^2 = 1$. Note that $\mathbf{e}_4^TM_0^{-1}\mathbf{e}_4$ is the bottom right entry of $M_0^{-1}$. By Cramer's rule, we have 
\[
\mathbf{e}_4^TM_0^{-1}\mathbf{e}_4 = \frac{(-1)^{4+4}cof_{44}(M_0)}{\det(M_0)},
\]
where $cof_{ij}(M_0)$ denotes the $(j,i)$-th cofactor of $M_0$. Since $cof_{44}(M_0)$ depends on the first three rows and first three columns of $M_0$, hence; it is independent of $C(\mu^+,\mu^-)$. On the other hand, when $\mu^+ = \mu^- = 1$, the last row of $M_0(1,1)$ becomes 
\[
C(1,1) = (0,0,0,1),
\]
so that $\det(M_0(1,1)) = cof_{44}(M_0)$. Thus, 
\[
\mathbf{e}_4^TM_0^{-1}\mathbf{e}_4 = \frac{\det(M_0(1,1))}{\det(M_0(\mu^+,\mu^-))}.
\]
Substituting this into \eqref{eq: det M1 tilde}, we obtain \eqref{eq: det M1}. This completes the proof. In particular, the local Stokes IFE system is algebraically reducible to the scalar elliptic case.
\end{proof}

Now we consider the unisolvence of the IFE space $\bfS_{h2}$ for Stokes interface problem (P2) in stress formulation. The only condition that changes is the traction jump condition $\jump{\textcolor{black}{(\mu_h\nabla\bfv-q\mathbb{I})\bar \bfn}}_{{\Gamma}_{h,T}}$ is replaced with 
$\jump{\textcolor{black}{(2\mu_h\varepsilon(\bfv)-q\mathbb{I})\bar \bfn}}_{{\Gamma}_{h,T}}$. 
\textcolor{black}{
Due to the relation 
\begin{equation}
\jump{(\textcolor{black}{2\mu_h\varepsilon(\bfv)-q\mathbb{I})\bar\bfn}}_{{\Gamma}_{h,T}} 
= \jump{(\textcolor{black}{\mu_h\nabla\bfv-q\mathbb{I})\bar\bfn}}_{{\Gamma}_{h,T}} + \jump{\textcolor{black}{\mu_h(\nabla\bfv)^T\bar\bfn}}_{{\Gamma}_{h,T}},
\end{equation}
and the form of $\bfv=(v_1, v_2)$ that
\begin{equation*}
 v_k(x,y)=\left\{
\begin{array}{ll}
 v_k^+(x,y)=a_k+b_kx+c_ky+m_kL(x,y), &\text{~if~}(x,y)\in T_h^+,\\
  v_k^-(x,y)=a_k+b_kx+c_ky, &\text{~if~}(x,y)\in T_h^-.\\
\end{array}
\right.
 \end{equation*}
 By straightforward calculation, we have
\begin{equation*}
\jump{\mu_h(\nabla\bfv)^T\bar \bfn}_{{\Gamma}_{h,T}}
=(\mu^+-\mu^-)\left((\nabla\bfv^-)^T\bar \bfn\right)+
\mu^+\left(\nabla\begin{pmatrix}
m_1L(x,y)\\  
m_2L(x,y)
\end{pmatrix}\right)^T\textcolor{black}{\bar \bfn},
\end{equation*}
and
\begin{equation*}
\left(\nabla\begin{pmatrix}
m_1L(x,y)\\  
m_2L(x,y)
\end{pmatrix}\right)^T\bar{\bfn}=
\begin{pmatrix}
(m_1\bar{n}_1+m_2\bar{n}_2)\bar{n}_1\\  
(m_1\bar{n}_1+m_2\bar{n}_2)\bar{n}_2
\end{pmatrix}=
\jump{\nabla\cdot\bfv}_{\Gamma_{h,T}}\bar{\bfn}  =0,
\end{equation*}
which lead to
\begin{equation}
\jump{\mu_h(\nabla\bfv)^T\bar \bfn}_{{\Gamma}_{h,T}}
=(\mu^+-\mu^-)\left((\nabla\bfv^-)^T\bar \bfn\right).
\end{equation}
 }

The local IFE coefficient matrix for Problem (P2) can be written as
\textcolor{black}{
\begin{equation}
M_2(\mu^+,\mu^-) = 
\begin{pmatrix}
A & 0 & 0 & 0\\
C &0 & -\textcolor{black}{\bar n}_1 & \textcolor{black}{\bar n}_1\\
0 & A & 0 & 0\\
0 & C & -\textcolor{black}{\bar n}_2 & \textcolor{black}{\bar n}_2 \\
D_1 & D_2 & 0 & 0\\
0 & 0 & \textcolor{black}{\frac{|T^+_h|}{|T|}} & \textcolor{black}{\frac{|T^-_h|}{|T|}}
\end{pmatrix}
+(\mu^+-\mu^-)
\begin{pmatrix}
0 & 0 & 0 & 0\\
C_{11} &C_{12} &0 &0\\
0 & 0 & 0 & 0\\
C_{21} &C_{22} &0 & 0 \\
0 &0 & 0 & 0\\
0 & 0 &0 &0
\end{pmatrix},  
\end{equation}
}
where
\[
C_{11} = \begin{pmatrix} 0, &\textcolor{black}{\bar n}_1, &0, &0 \end{pmatrix} = \textcolor{black}{\bar n}_1\mathbf{e}_2^T\qquad
C_{12} = \begin{pmatrix} 0, &\textcolor{black}{\bar n}_2, &0, &0 \end{pmatrix} = \textcolor{black}{\bar n}_2\mathbf{e}_2^T.
\]
\[
C_{21} = \begin{pmatrix} 0, & 0, &\textcolor{black}{\bar n}_1, &0 \end{pmatrix} = \textcolor{black}{\bar n}_1\mathbf{e}_3^T\qquad
C_{22} = \begin{pmatrix} 0, & 0, &\textcolor{black}{\bar n}_2, &0 \end{pmatrix} = \textcolor{black}{\bar n}_2\mathbf{e}_3^T.
\]

The following theorem shows that the unisolvence for two Stokes formulations is equivalent. 

\begin{theorem}[Unisolvence of the Stress Formulation]\label{thm:stress}
Given any positive $\mu^\pm$, there holds
\begin{equation}\label{eq: stokes det equal}
    \det \big(M_2(\mu^+,\mu^-)\big)=\det \big(M_1(\mu^+,\mu^-)\big).
\end{equation}
Consequently, the CR-$P_0$ immersed finite element in the stress formulation is also unisolvent.
\end{theorem}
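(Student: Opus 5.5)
The plan is to treat $M_2$ as a structured perturbation of $M_1$ and show that the perturbation does not change the determinant. The Schur complement technique from the proof of Theorem \ref{thm:det-2d} should make the correction terms computable.

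\textbf{Step 1: reduce to a $9\times 9$ matrix.} As before, add the ninth column of $M_2$ to its tenth column. The perturbation block of $M_2$ has zero pressure columns, so this still produces a last column equal to $\mathbf e_{10}$. Hence $\det M_2=\det\widetilde M_2$, where
\[
\widetilde M_2=\widetilde M_1+(\mu^+-\mu^-)\big(\mathbf e_4\,\mathbf r^T+\mathbf e_8\,\mathbf s^T\big).
\]
Here $\widetilde M_1$ is the matrix in \eqref{eq: M1 tilde}, and the rank-one pieces are
\[
\mathbf r^T=(\bar n_1\mathbf e_2^T,\ \bar n_2\mathbf e_2^T,\ 0),\qquad
\mathbf s^T=(\bar n_1\mathbf e_3^T,\ \bar n_2\mathbf e_3^T,\ 0).
\]
Rows $4$ and $8$ are the two traction rows. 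The functionals $\mathbf r$ and $\mathbf s$ encode $\bar n_1b_1+\bar n_2b_2$ and $\bar n_1c_1+\bar n_2c_2$, which are the two components of $\nabla(\bfv^-\cdot\bar\bfn)$.

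\textbf{Step 2: expand by multilinearity.} The determinant is linear in each of the two perturbed rows. It therefore splits as
\[
\det\widetilde M_2=\det\widetilde M_1+(\mu^+-\mu^-)\Delta_4+(\mu^+-\mu^-)\Delta_8+(\mu^+-\mu^-)^2\Delta_{48}.
\]
In $\Delta_4$, row $4$ of $\widetilde M_1$ is replaced by $\mathbf r^T$; in $\Delta_8$, row $8$ is replaced by $\mathbf s^T$; in $\Delta_{48}$, both replacements are made. It then suffices to show $\Delta_4=\Delta_8=\Delta_{48}=0$.

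\textbf{Step 3: show the correction determinants vanish.} For $\Delta_4$ I would exhibit a nonzero vector $\mathbf c$ annihilated by every row except row $4$ and also by $\mathbf r^T$. Such a vector exists exactly when that matrix is singular. Equivalently, I would evaluate $\Delta_4$ by the Schur complement as in \eqref{eq: det M1 tilde}. After replacing row $4$, the upper-left $4\times4$ block becomes $M_0$ with its last row $C$ swapped for $\bar n_1\mathbf e_2^T$. The remaining rows still consist of $A$ blocks, the divergence row $\bar n_1\mathbf e_4^T,\bar n_2\mathbf e_4^T$, and the pressure column $(-\bar n_1,-\bar n_2)$ at rows $4$ and $8$. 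By Cramer's rule, as in the proof of Theorem \ref{thm:det-2d}, the resulting expressions involve only cofactors of the purely geometric block $A$.

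The vanishing should come from two facts. First, the divergence constraint $m_1\bar n_1+m_2\bar n_2=0$ couples the two velocity blocks only through the direction $\bar\bfn$. Second, the extra rows $\mathbf r,\mathbf s$ pair the components through the same $\bar\bfn$. The contributions should therefore combine into expressions of the form $\bar n_1\bar n_2-\bar n_2\bar n_1=0$. This is the same cancellation already seen in the text: the $m$-part of $(\nabla\bfv)^T\bar\bfn$ equals $\jump{\nabla\cdot\bfv}\bar\bfn$. The cross term $\Delta_{48}$ should be handled the same way, by expanding along both replaced rows.

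\textbf{Main obstacle.} The hard step is the concrete vanishing of $\Delta_4$, $\Delta_8$ and $\Delta_{48}$. If the direct Schur computation becomes unwieldy, I would first look for explicit row and column operations on $\widetilde M_2$ of determinant one that absorb $(\mu^+-\mu^-)\mathbf r^T$ and $(\mu^+-\mu^-)\mathbf s^T$. For instance, one could try to use the divergence row together with a reparametrization of $(b_k,c_k)$ along $\bar\bfn$ and its tangent, after which the perturbation might become a combination of existing rows.

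\textbf{Conclusion.} Once \eqref{eq: stokes det equal} is established, Theorem \ref{thm:det-2d} and Lemma \ref{lemma:Scalar-Unisolvence-2D} give $\det M_2=\det M_0(\mu^+,\mu^-)\det M_0(1,1)\neq0$. This proves the unisolvence of the stress-formulation CR-$P_0$ immersed element.
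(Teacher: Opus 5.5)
\textbf{There is a genuine gap.} Steps 1 and 2 are sound bookkeeping. The column operation carries over to $M_2$, and $\mathbf r,\mathbf s$ are the correct perturbation rows. But the proof stops where the actual content lies: Step 3 only asserts that $\Delta_4,\Delta_8,\Delta_{48}$ ``should'' vanish, and you flag this yourself as the main obstacle.

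The Schur-complement route you sketch is also not reliable. It would require inverting $\begin{pmatrix}A\\ \bar n_1\mathbf e_2^T\end{pmatrix}$, which is singular, e.g., whenever $\bar n_1=0$. Nor is the $m$-cancellation $(\nabla(m_kL))^T\bar{\mathbf n}=\jump{\nabla\cdot\mathbf v}_{\Gamma_{h,T}}\bar{\mathbf n}$ the relevant mechanism. It was already used in deriving the form of $M_2$, and the remaining perturbation acts only on the coefficients $b_k,c_k$ of $\mathbf v^-$.

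\textbf{The missing idea}, which is the whole of the paper's proof, is that $M_0(1,1)=\begin{pmatrix}A\\ \mathbf e_4^T\end{pmatrix}$ is invertible. This follows from Lemma~\ref{lemma:Scalar-Unisolvence-2D} with $\mu^\pm=1$, or directly, since with $m=0$ it is the standard CR matrix. Hence
\[
\mathbf e_2^T=\mathbf h_1^TA+k_1\mathbf e_4^T,\qquad \mathbf e_3^T=\mathbf h_2^TA+k_2\mathbf e_4^T.
\]
Using the same $\mathbf h_j,k_j$ in both velocity blocks, the weights $\bar n_1,\bar n_2$ match those of the single divergence row exactly:
\[
\mathbf r^T=\bar n_1\,(\mathbf h_1^TA,\,0,\,0)+\bar n_2\,(0,\,\mathbf h_1^TA,\,0)+k_1\,(D_1,\,D_2,\,0),
\]
and likewise for $\mathbf s^T$ with $\mathbf h_2,k_2$. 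So both perturbation rows are combinations of rows $1$--$3$, $5$--$7$ and $9$ of $\widetilde M_1$, none of which is perturbed. Subtracting $(\mu^+-\mu^-)$ times these combinations from the two traction rows turns $M_2$ into $M_1$ by determinant-preserving row operations. No multilinear expansion is needed.

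Within your framework, this kills all three correction terms at once, since each of those matrices has a row in the span of its other rows. $\Delta_{48}=0$ is even more immediate, because its pressure column is zero.

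Alternatively, your kernel-vector idea can be made concrete. For $0\neq\mathbf z\in\ker A$, take $(-\bar n_2\mathbf z,\ \bar n_1\mathbf z,\ p)$, with $p$ chosen to satisfy row $8$. This vector is annihilated by every row of the $\Delta_4$ matrix. If $\bar n_2=0$, the pressure column of that matrix vanishes and there is nothing to prove. This is where your $\bar n_1\bar n_2-\bar n_2\bar n_1$ cancellation genuinely occurs.
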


\begin{proof}
Recall that the matrices $M_1(\mu^+,\mu^-)$ and $M_2(\mu^+,\mu^-)$ differ only by the
additional row contributions
\[
\textcolor{black}{(\mu^+-\mu^-)}(C_{11},C_{12},0,0), \qquad \textcolor{black}{(\mu^+-\mu^-)}(C_{21},C_{22},0,0),
\]
where
\begin{equation}\label{eq: C vectors}
   (C_{11},C_{12})=(\textcolor{black}{\bar n}_1\mathbf{e}_2^T,\textcolor{black}{\bar n}_2\mathbf{e}_2^T), \qquad
(C_{21},C_{22})=(\textcolor{black}{\bar n}_1\mathbf{e}_3^T,\textcolor{black}{\bar n}_2\mathbf{e}_3^T). 
\end{equation}

Note that
$D_1=\textcolor{black}{\bar n}_1\mathbf{e}_4^T$, $D_2=\textcolor{black}{\bar n}_2\mathbf{e}_4^T.$
Since the scalar IFE matrix
$
M_0(1,1)=\begin{pmatrix}A\\ \mathbf{e}_4^T\end{pmatrix}
$
is invertible, the vectors $\mathbf{e}_2^T$ and $\mathbf{e}_3^T$ lie in the row space of $M_0(1,1)$. Hence there exist vectors $\mathbf{h}_1,\mathbf{h}_2\in\mathbb{R}^{3}$
and scalars $k_1,k_2\in\mathbb{R}$ such that
\[
\mathbf{e}_2^T=\mathbf{h}_1^T A + k_1 \mathbf{e}_4^T, \qquad
\mathbf{e}_3^T=\mathbf{h}_2^T A + k_2 \mathbf{e}_4^T.
\]
Multiplying these vectors by $\textcolor{black}{\bar n}_1$ and $\textcolor{black}{\bar n}_2$, then vectors in \eqref{eq: C vectors} can be written as 

\[
(C_{11},C_{12})=(\textcolor{black}{\bar n}_1\mathbf{h}_1^T,\textcolor{black}{\bar n}_2\mathbf{h}_1^T) 
  \begin{pmatrix}  A &0\\ 0 &A \end{pmatrix}+k_1(D_1,D_2), \qquad
(C_{21},C_{22})=(\textcolor{black}{\bar n}_1\mathbf{h}_2^T,\textcolor{black}{\bar n}_2\mathbf{h}_2^T)  
 \begin{pmatrix}  A &0\\ 0 &A \end{pmatrix} +k_2(D_1,D_2).
\]

It follows that the new row contributions $\textcolor{black}{(\mu^+-\mu^-)}(C_{11},C_{12},0,0)$ and $\textcolor{black}{(\mu^+-\mu^-)}(C_{21},C_{22},0,0)$ in $M_2$
are linear combinations of the rows of $M_1$
\[(A,0,0,0), ~~(0,A,0,0), ~~\text{and} ~~(D_1,D_2,0,0).\]
Therefore, by elementary row operations, $M_2(\mu^+,\mu^-)$ can be reduced to
$M_1(\mu^+,\mu^-)$ without changing the determinant. Hence
\[
\det(M_2(\mu^+,\mu^-))=\det(M_1(\mu^+,\mu^-)).
\]

The conclusion follows from Theorem~3.1.
\end{proof}

\begin{remark}
The Theorem \ref{thm:stress} shows that the extra term $\jump{\textcolor{black}{\mu_h(\nabla\bfv)^T\bar\bfn}}_{{\Gamma}_{h,T}}$ in the stress formulation does not introduce a new independent constraint. Instead, it can be represented as a linear combination of the existing CR degrees of freedom and the divergence constraint.  
\end{remark}

\begin{remark}
Existing works on CR-$P_0$ immersed finite elements for Stokes interface problems, such as \cite{JonesZhang2021,ji2022immersed}, establish unisolvence through direct analysis of the local linear systems. In contrast, the present work reveals a previously unrecognized algebraic structure linking the Stokes and scalar elliptic IFE spaces. In particular, the determinant factorization shows that the unisolvence of the Stokes element can be reduced to that of the scalar elliptic element, thereby providing a unified and structurally transparent explanation.
\end{remark}

\begin{remark}
It was observed in \cite{JonesZhang2021} that the rotated $Q_1$-$Q_0$ IFE spaces yield identical determinants for both the gradient and stress formulations of Stokes interface problems. The analysis developed in this work extends to the rotated $Q_1$-$Q_0$ IFE spaces defined on rectangular meshes for Stokes interface problems. In particular, Theorem~\ref{thm:det-2d} and Theorem~\ref{thm:stress} remain valid in this setting.

\end{remark}

\section{Three-Dimensional IFE Spaces for Stokes Problems}
\label{sec: 3D}
In this section, we develop the CR-$P_0$ immersed finite element spaces for the three-dimensional Stokes interface problems in both gradient and stress formulations. We then show that their unisolvence follows from the same structural factorization principle established in two dimensions.

\subsection{CR-$P_0$ IFE Spaces}
The construction follows the same principle as in two dimensions, but additional care is required because of the extra velocity component and the more complicated geometry of interface tetrahedra. 

\textcolor{black}{
   As in \cite{2025GuoZhang}, we impose the following assumptions on the interface-tetrahedron configuration:
\begin{description}
    \item[(H3)]  The interface $\Gamma$ intersects each edge of a tetrahedron at most once.
    \item[(H4)]  On each face of a tetrahedron, the interface $\Gamma$ intersects at most two edges. 
\end{description}
}

Under these assumptions, we need to consider the following two geometric configurations of interface tetrahedra.
\begin{itemize}
    \item Type I: one vertex is separated from the other three. 
    \item Type II: two vertices are separated from the other two. 
\end{itemize}
See Figure \ref{fig:InterfaceTetrahedron} for an illustration. In the Type II case, the four intersection points need not be coplanar. Following \cite{2020GuoLin, 2021GuoZhang, 2025GuoZhang}, we use the maximum-angle criterion to select three of these points so that each interior angle is bounded above by $\theta<\pi$, and we use them to define the approximation plane. The existence of such a selection is guaranteed by Lemma 2.1 in \cite{2025GuoZhang}. These three selected points are still denoted by $D$, $E$, and $F$. For both types, we use the plane $\Gamma_{h,T}=\triangle DEF$ to approximate the actual interface segment $\Gamma\cap T$ inside the element. The tetrahedron $T$ is thereby divided into two subregions, denoted by $\textcolor{black}{T_h^+}$ and $\textcolor{black}{T_h^-}$.
\textcolor{black}{The above planar approximation may lead to a discontinuous discrete interface across a face shared by two neighboring interface elements. An alternative is to use a level-set-based approximation, as in \cite{ji2023immersed}. In that approach, the zero level set of a linear interpolation of the level-set function gives a planar interface approximation inside each tetrahedron and is compatible across shared faces. The determinant-factorization and unisolvence results developed here depend only on the algebraic structure of the local interface constraints, and therefore can also be applied to such level-set-based approximations.
}

\begin{figure}[htbp]
  \centering
  \includegraphics[scale=1]{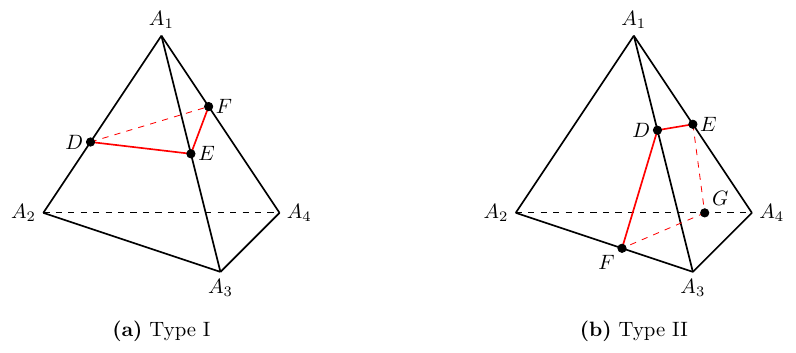}
  \caption{Two typical interface tetrahedral elements.}
  \label{fig:InterfaceTetrahedron}
\end{figure}


On each subregion \textcolor{black}{$T_h^\pm$}, the velocity is approximated by vector-valued linear polynomials, while the pressure is approximated by piecewise constants. Specifically, we write

\[
\mathbf v^\pm(x,y,z)=
\begin{pmatrix}
a_1^\pm+b_1^\pm x+c_1^\pm y+d_1^\pm z \vspace{1mm}\\
a_2^\pm+b_2^\pm x+c_2^\pm y+d_2^\pm z \vspace{1mm}\\
a_3^\pm+b_3^\pm x+c_3^\pm y+d_3^\pm z
\end{pmatrix},
\qquad
q(x,y,z) = q^\pm.
\]

The coefficients are determined by enforcing the CR degrees of freedom
on the faces of the tetrahedron together with the interface jump conditions \eqref{eq:Continuity-grad-1}-\eqref{eq:Continuity-grad-3} or \eqref{eq:Continuity-grad-1}, \eqref{eq:Continuity-epsi-2}, \eqref{eq:Continuity-grad-3} for gradient and stress continuity, respectively. 
The degrees of freedom of the CR-$P_0$ IFE element consist of the face averages of each velocity component,
\[
N_{4(k-1)+i,T}(\mathbf{v},q) = \frac{1}{|F_i|}\int_{F_i} v_k\,ds, \qquad i=1,2,3,4,\quad k=1,2,3,
\]
together with the element average of the pressure,
\[
N_{13,T}(\mathbf{v},q)=\frac{1}{|T|}\int_T q\,d\bfx.
\]
In total, there are $13$ local degrees of freedom in one element.  These conditions produce a local linear system that uniquely determines the coefficients of the immersed finite element shape functions. Figure \ref{fig:ShapeFunction} compares the standard finite element shape function with the CR-$P_0$ IFE shape functions on Type I and Type II interface tetrahedra.

The local IFE spaces $\mathbf{S}_{hj}^i(T)$,  $j=1,2$ are defined in a manner similar to \eqref{eq: local space 1} and \eqref{eq: local space 2} for the two-dimensional Stokes problem in gradient and stress formulation, respectively. The global IFE spaces $\bfS_{hj}(\mathcal{T}_h)$ for three-dimensional case are also defined similarly as in \eqref{eq: global space}.

\begin{figure}[htb!]
    \centering

    \begin{subfigure}{\textwidth}
        \centering
        \includegraphics[width=0.26\textwidth]{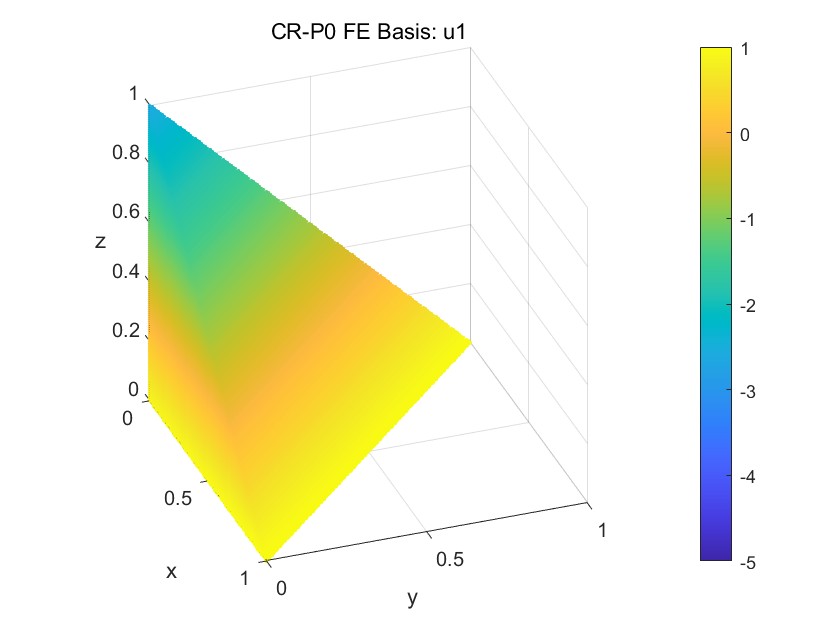}%
        \includegraphics[width=0.26\textwidth]{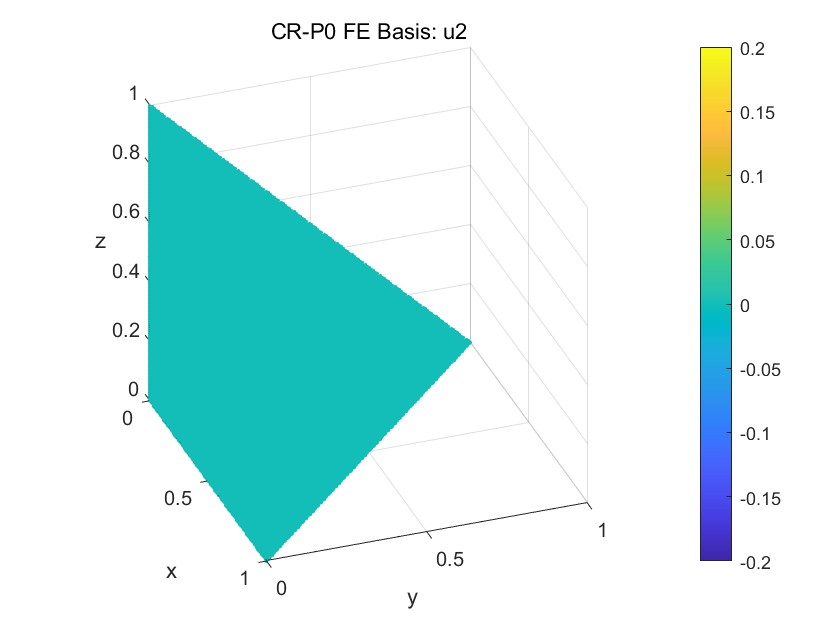}%
        \includegraphics[width=0.26\textwidth]{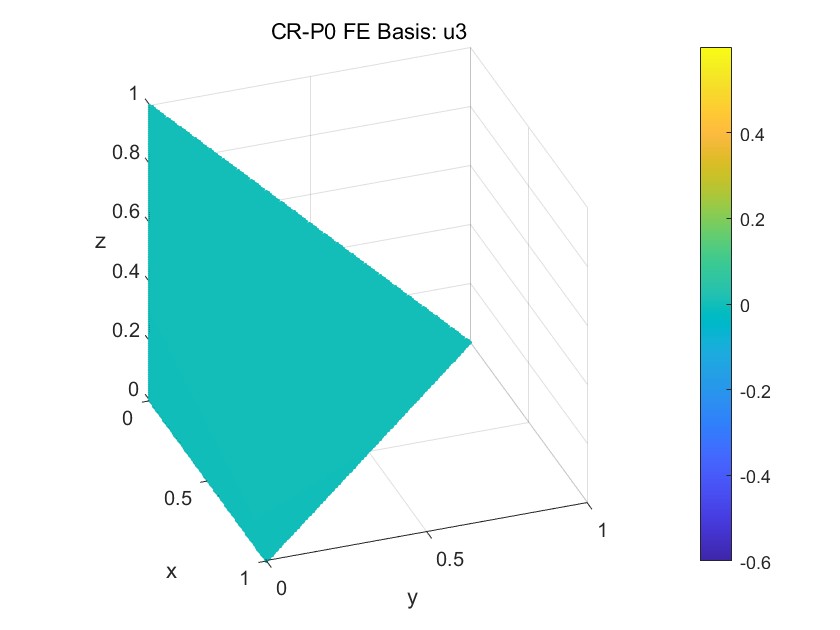}%
        \includegraphics[width=0.26\textwidth]{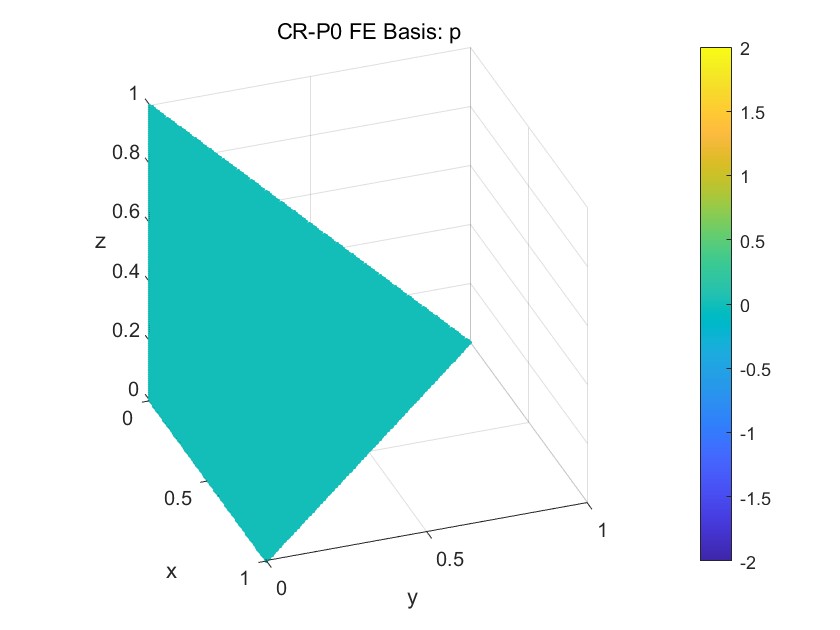}%
        \caption{CR-$P_0$ FE shape function. From left: $u_1,~u_2,~u_3,~p$.}
    \end{subfigure}

    \begin{subfigure}{\textwidth}
        \centering
        \includegraphics[width=0.26\textwidth]{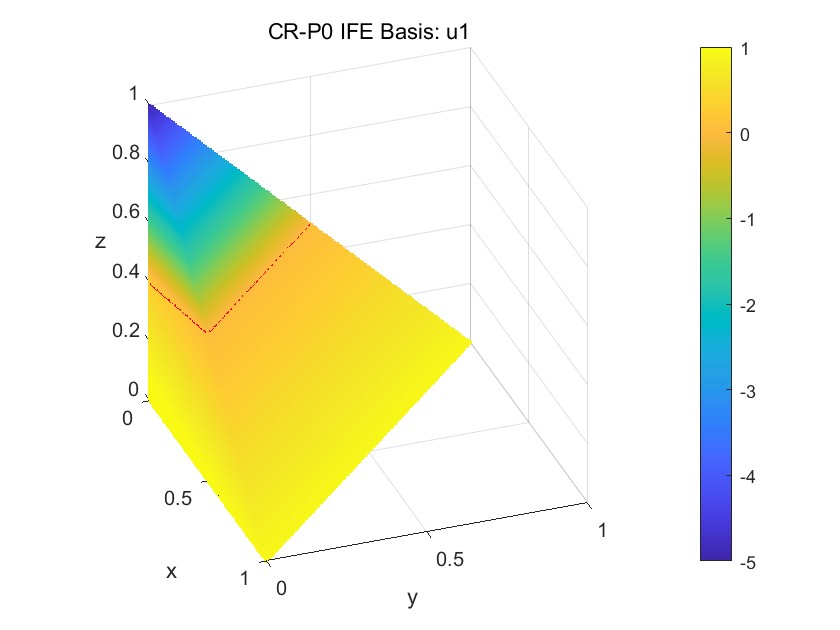}%
        \includegraphics[width=0.26\textwidth]{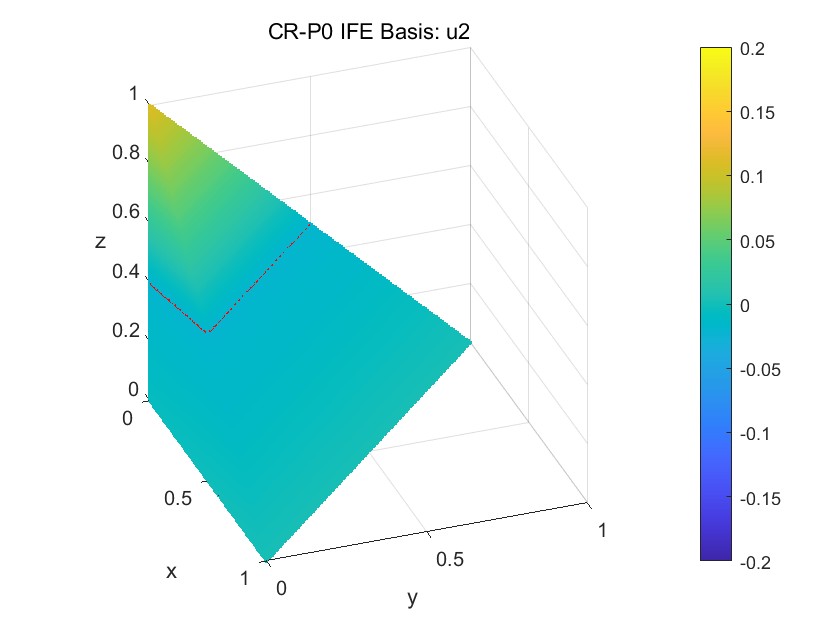}%
        \includegraphics[width=0.26\textwidth]{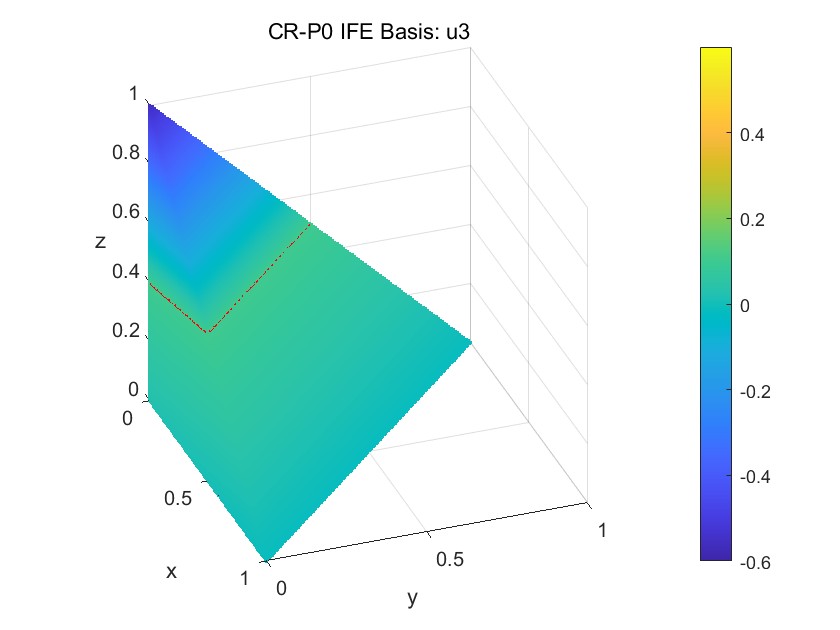}%
        \includegraphics[width=0.26\textwidth]{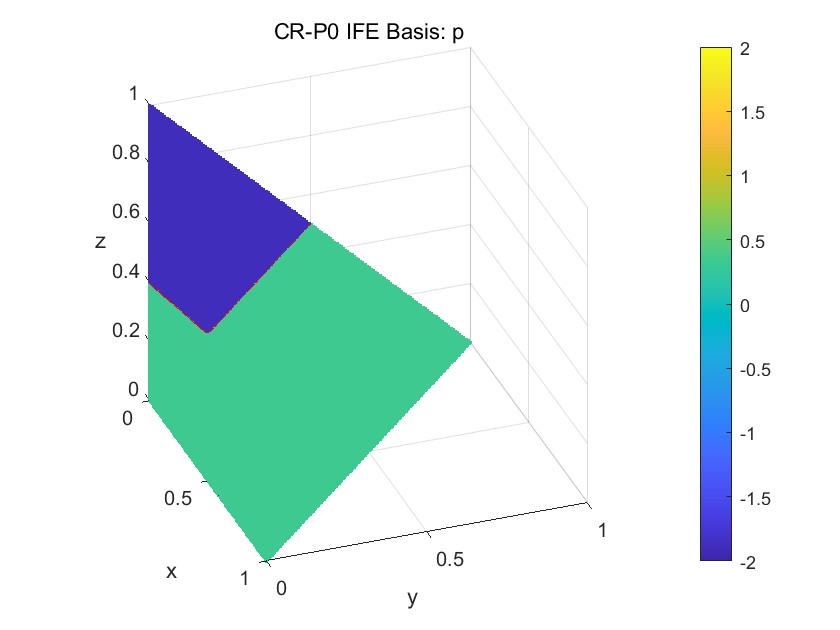}%
        \caption{CR-$P_0$ IFE shape function of Type I. From left: $u_1,~u_2,~u_3,~p$.}
    \end{subfigure}
    \begin{subfigure}{\textwidth}
        \centering
        \includegraphics[width=0.26\textwidth]{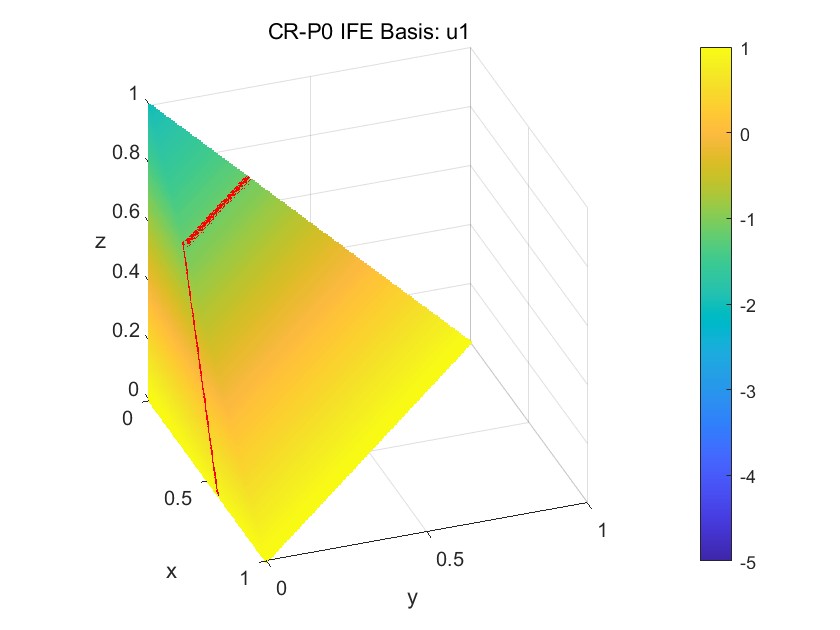}%
        \includegraphics[width=0.26\textwidth]{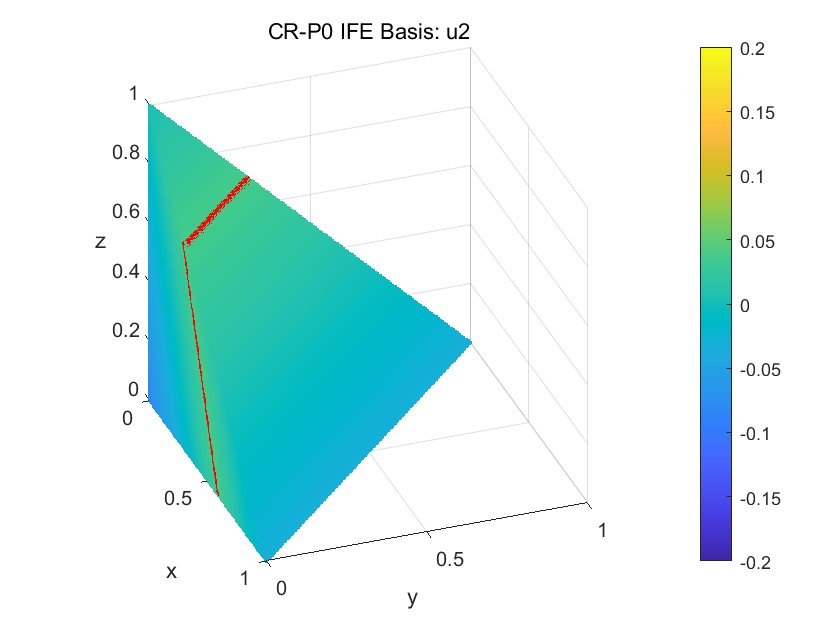}%
        \includegraphics[width=0.26\textwidth]{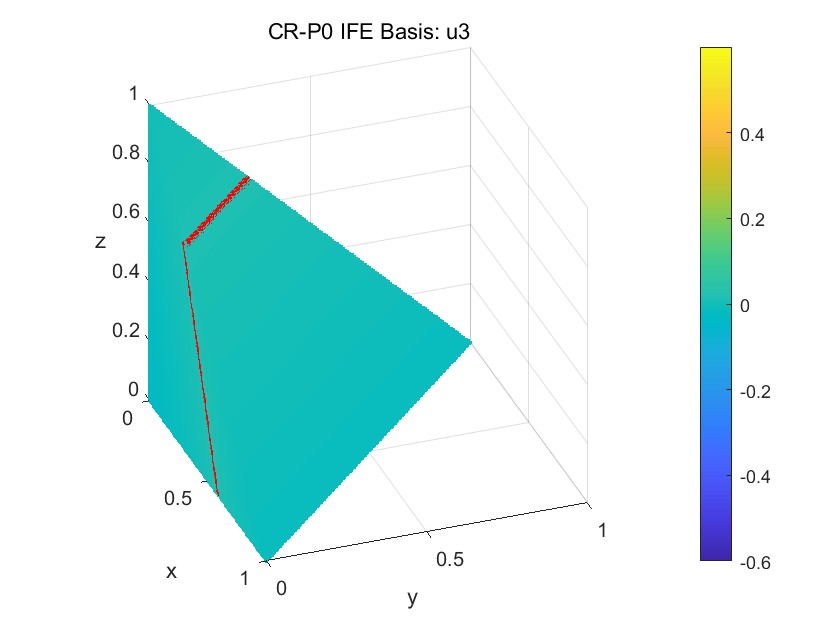}%
        \includegraphics[width=0.26\textwidth]{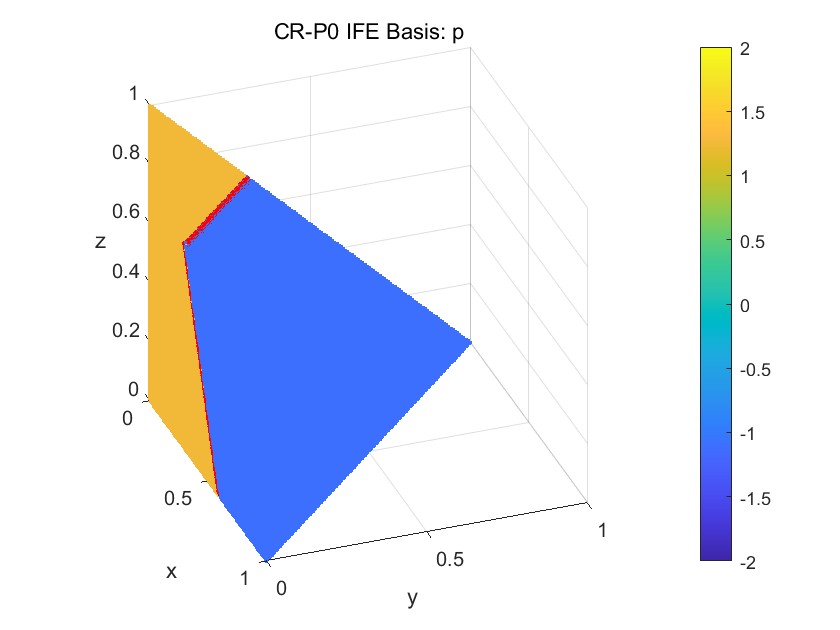}%
        \caption{CR-$P_0$ IFE shape function of Type II. From left: $u_1,~u_2,~u_3,~p$.}
    \end{subfigure}

    \caption{Comparison of the FE shape function and IFE shape functions for problem (P1) corresponding to the degree of freedom $\frac{1}{|F_1|} \int_{F_1} v_1 ds$ with $F_1$ the bottom face. The part that contains the vertex $(0,0,1)$ lies in $\Omega^{-}$, and the remaining part lies in $\Omega^ {+}$. The coefficients are $\mu^- = 1$, $\mu^+ = 5$.}
    \label{fig:ShapeFunction}
\end{figure}

\subsection{Unisolvence}
The local system matrix for the three-dimensional CR-$P_0$ immersed element has a block structure analogous to that in the two-dimensional case. As a result, its determinant can again be reduced to determinants associated with the scalar elliptic IFE system. We now make this relation precise and derive the unisolvence of the three-dimensional Stokes CR-$P_0$ element from the scalar elliptic case.


Under the continuity condition \eqref{eq:Continuity-scalar-1}, the scalar IFE function for elliptic equation in three dimensions can be written as
 \begin{equation}
 v(x,y,z)=\left\{
\begin{array}{ll}
v^+(x,y,z)=a+bx+cy+dz+mL(x,y,z), &\text{~if~}(x,y,z)\in \textcolor{black}{T_h^+},\\
v^-(x,y,z)=a+bx+cy+dz, &\text{~if~}(x,y,z)\in \textcolor{black}{T_h^-},\\
\end{array}
\right.
 \label{eq:scalar-function-3D-2}
 \end{equation}
where $L(x,y,z) = \textcolor{black}{\bar n}_1(x-x_D)+\textcolor{black}{\bar n}_2(y-y_D)+\textcolor{black}{\bar n}_3(z-z_D)$. 

The equation \eqref{eq:scalar-function-3D-2} involves five unknown coefficients, namely $a$, $b$, $c$, $d$, and $m$. These coefficients are determined by the four face-average degrees of freedom together with the flux jump condition \eqref{eq:Continuity-scalar-2}. Accordingly, the local coefficient matrix takes the form
\begin{equation}\label{eq: 3D M0}
    M_0(\mu^+,\mu^-)=\begin{pmatrix} A \\ C \end{pmatrix},
\end{equation}
where $A\in\mathbb{R}^{4\times 5}$ arises from the face-average conditions and $C\in\mathbb{R}^{1\times 5}$ arises from the flux jump condition. More precisely, 
\begin{equation}\label{eq:MatrixC-3D}
C(\mu^+,\mu^-) =
\begin{pmatrix}
0, & (\mu^+-\mu^-)\textcolor{black}{\bar n}_1, & (\mu^+-\mu^-)\textcolor{black}{\bar n}_2,  & (\mu^+-\mu^-)\textcolor{black}{\bar n}_3, & \mu^+\\
\end{pmatrix}.
\end{equation}

The following lemma, which follows from the three-dimensional elliptic IFE analysis in \cite{ji2023immersed}, will serve as the key building block.

\begin{lemma}\label{lemma:Scalar-Unisolvence-3D}
Let $M_0(\mu^+,\mu^-)$ be the coefficient matrix defined in \eqref{eq: 3D M0}. 
Then
\[
\det \big(M_0(\mu^+,\mu^-)\big)\neq 0
\]
for any admissible interface configuration and any $\mu^\pm>0$.
\end{lemma}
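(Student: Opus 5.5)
The statement is attributed to the three-dimensional elliptic IFE analysis of \cite{ji2023immersed}. The plan is therefore to reduce $\det(M_0(\mu^+,\mu^-))\neq 0$ to the unisolvence proved there, rather than computing the $5\times 5$ determinant from scratch.

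The first step is to note that $\det M_0$ is affine in the parameters. Expanding along the last row $C(\mu^+,\mu^-)$ in \eqref{eq:MatrixC-3D} gives
\[
\det\big(M_0(\mu^+,\mu^-)\big)=\mu^+\,\mathrm{cof}_{55}+(\mu^+-\mu^-)\,\beta,
\]
where $\mathrm{cof}_{55}=\det(M_0(1,1))$ depends only on $A$. The quantity $\beta$ collects the cofactors of entries $2$ to $4$ weighted by $\bar n_1,\bar n_2,\bar n_3$, and is also purely geometric. Dividing by $\mu^-$ and setting $\rho=\mu^+/\mu^-$, we get $\det M_0=\mu^-\big(\rho\,\det M_0(1,1)+(\rho-1)\beta\big)$. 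Nonvanishing for all $\rho>0$ is then equivalent to two facts: $\det M_0(1,1)\neq 0$, and $\beta$ and $\det M_0(1,1)+\beta$ do not have strictly opposite signs. These are the conditions at $\rho\to 0^+$, $\rho=1$, and $\rho\to\infty$ (the last corresponding to $\det M_0(1,0)$), so the three cases $\rho\to 0^+$, $\rho=1$, $\rho\to\infty$ must all be controlled.

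The second step is to identify these quantities geometrically.
\begin{itemize}
\item $\det M_0(1,1)\neq 0$: here the flux condition forces $m=0$, so this is the unisolvence of the standard $P_1$ Crouzeix--Raviart element on a tetrahedron. Face averages equal the values at the face barycenters, and these barycenters are affinely independent.
\item Sign condition on $\beta$: I would follow \cite{ji2023immersed} and write the face averages of $L$ on $T_h^+$ through the volume ratios of the cut faces. This expresses $\beta/\det M_0(1,1)$ as a quantity $\gamma\ge 0$, roughly $\int$ of $L$ over the parts of the faces lying in $T_h^+$, weighted by the CR basis gradients dotted with $\bar\bfn$. One then gets $\det M_0=\mu^-\det M_0(1,1)\big(\rho+(\rho-1)\gamma\big)$ with $1+\gamma>0$ and $\gamma\ge0$ or, symmetrically, $\gamma\in(-1,0]$ after swapping $\pm$.
\end{itemize}

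The main obstacle is this sign argument for Type I and Type II tetrahedra under (H3)--(H4), with the planar $\Gamma_{h,T}$ chosen by the maximum-angle rule. This is exactly what \cite{ji2023immersed} establishes: there the unisolvence holds on arbitrary tetrahedra without angle restrictions, for the interface plane cutting $T$. Since our $M_0$ is the same algebraic system for any cutting plane of $T$, I would invoke that result directly for the plane $\triangle DEF$ and conclude. If needed, I would only check that $\triangle DEF$ genuinely splits $T$ into two nonempty pieces, so that the cited hypotheses hold.
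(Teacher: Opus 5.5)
Your final step is the same as the paper's: the paper gives no argument of its own for this lemma and simply defers to \cite{ji2023immersed}. Your preliminary observations are also correct, namely the affine dependence $\det M_0=\mu^+\det M_0(1,1)+(\mu^+-\mu^-)\beta$, and $\det M_0(1,1)\neq 0$ via the affinely independent face barycenters. However, the sign analysis in between is backwards, and if carried out as written it would contradict the lemma.

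First, $\det M_0(0,1)=-\beta$ (not $\beta$). Hence nonvanishing for all $\rho>0$ is equivalent to $\det M_0(1,1)\neq 0$ together with $\beta\,(\det M_0(1,1)+\beta)\le 0$. In words, $\beta$ and $\det M_0(1,1)+\beta$ must \emph{not} have strictly the same sign. Second, a Schur complement on the last column gives
\[
\det M_0(\mu^+,\mu^-)=\det M_0(1,1)\,\big((1-s)\mu^++s\mu^-\big),\qquad s:=\sum_{i=1}^4\lambda_i\,\nabla\phi_i\cdot\bar{\bfn},\quad \lambda_i:=\frac{1}{|F_i|}\int_{F_i\cap T_h^+}L\,ds,
\]
where $\phi_i$ are the standard CR basis functions on $T$. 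So your $\gamma=\beta/\det M_0(1,1)$ equals $-s$. The quantity you describe in words (face integrals of $L$ over the $T_h^+$ parts, weighted by $\nabla\phi_i\cdot\bar{\bfn}$) is $s$, not $\gamma$. The correct range is $s\in[0,1]$, i.e.\ $\gamma\in[-1,0]$, in the given orientation and with no swap of $\pm$. The limiting cases show this:
\begin{itemize}
\item for $T_h^+=\emptyset$ one gets $s=0$;
\item for $T_h^+=T$, $\lambda_i$ is the value of $L$ at the barycenter of $F_i$, so $\sum_i\lambda_i\phi_i=L$ and $s=\nabla L\cdot\bar{\bfn}=1$.
\end{itemize}
Under your stated condition $\gamma>0$, $1+\gamma>0$, the determinant would vanish at $\rho=\gamma/(1+\gamma)\in(0,1)$.

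So the geometric fact you actually need from \cite{ji2023immersed} is $0\le s\le 1$. That makes $\det M_0$ a nonzero geometric factor times a convex combination of $\mu^+$ and $\mu^-$. Your conclusion ultimately rests on invoking that result for the plane $\triangle DEF$, exactly as the paper does, so the lemma still follows. The intermediate reduction should nonetheless be corrected along these lines, or dropped.
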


Under the continuity condition \eqref{eq:Continuity-grad-1}, each velocity component has the representation
 \begin{equation}
 v_k(x,y,z)=\left\{
\begin{array}{ll}
v_k^+ =a_k+b_kx+c_ky+d_kz+m_kL(x,y,z), &\text{~if~}(x,y,z)\in \textcolor{black}{T_h^+},\\
v_k^- =a_k+b_kx+c_ky+d_kz, &\text{~if~}(x,y,z)\in \textcolor{black}{T_h^-},\\
\end{array}
\right. \quad k=1,2,3,
 \end{equation}
and the pressure is given by
\begin{equation}
 q(x,y,z)=\left\{
\begin{array}{ll}
q^+, &\text{~if~}(x,y,z)\in \textcolor{black}{T_h^+},\\
q^-, &\text{~if~}(x,y,z)\in \textcolor{black}{T_h^-}.\\
\end{array}
\right.
 \end{equation}

For the gradient formulation, the 17 unknown coefficients are determined by 13 local degrees of freedom together with the four interface constraints coming from \eqref{eq:Continuity-grad-2} and \eqref{eq:Continuity-grad-3}. Thus the local system is square.



The corresponding coefficient matrix can be written as
\begin{equation}\label{eq: M1 3D}
M_1(\mu^+,\mu^-) = 
\begin{pmatrix}
A & 0 & 0 & 0 & 0\\
C & 0 & 0 & -\textcolor{black}{\bar n}_1 & \textcolor{black}{\bar n}_1\\
0 & A & 0 & 0 & 0\\
0 & C & 0 & -\textcolor{black}{\bar n}_2 & \textcolor{black}{\bar n}_2 \\
0 & 0 & A & 0 & 0\\
0 & 0 & C & -\textcolor{black}{\bar n}_3 & \textcolor{black}{\bar n}_3 \\
D_1 & D_2 & D_3 & 0 & 0\\
0 & 0 & 0 & \textcolor{black}{\frac{|T^+_h|}{|T|}} & \textcolor{black}{\frac{|T^-_h|}{|T|}}
\end{pmatrix}.
\end{equation}

This matrix is the three-dimensional analogue of the block-structured matrix in Section \ref{Sec:Unisolvence}, with three copies of the scalar elliptic block $M_0 = \begin{pmatrix} A \\ C \end{pmatrix}$ corresponding to the three velocity components.
We observe that the divergence jump condition \eqref{eq:Continuity-grad-3} yields 
\begin{equation}
m_1\textcolor{black}{\bar n}_1+m_2\textcolor{black}{\bar n}_2+m_3\textcolor{black}{\bar n}_3=0, \label{eq:Continuity-grad-3-3Dmatrix}
\end{equation}
and hence
\[
D_1=(0,0,0,0,\textcolor{black}{\bar n}_1),\qquad D_2=(0,0,0,0,\textcolor{black}{\bar n}_2),\qquad D_3=(0,0,0,0,\textcolor{black}{\bar n}_3).
\]

The following result shows that the structural factorization established in two dimensions extends directly to three dimensions.

\begin{theorem}
    [Dimension-independent factorization]
    For the three-dimensional Stokes interface problem in gradient form, the local IFE matrix satisfies
\begin{equation}\label{eq: factor 3}
\det \big(M_1(\mu^+,\mu^-)\big) = \det \big(M_0(\mu^+,\mu^-)\big)^{2} \det \big(M_0(1,1)\big).
\end{equation}
In particular, the corresponding CR-$P_0$ IFE space is unisolvent in three dimensions.
\end{theorem}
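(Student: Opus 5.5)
The plan is to follow the proof of Theorem~\ref{thm:det-2d} step by step, with the $3\times 3$ block structure of \eqref{eq: M1 3D} in place of the $2\times 2$ one.

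\emph{Step 1: eliminate the pressure.} Add the second-to-last column of $M_1$ to its last column. In each traction row the pressure entries are $-\bar n_k$ and $\bar n_k$, so the new last-column entry there is $0$. In the pressure-average row the two entries become $|T_h^+|/|T|$ and $|T_h^+|/|T|+|T_h^-|/|T|=1$. All other rows have zero pressure entries. Hence the new last column is $\mathbf{e}_{17}$. Expanding along this column gives $\det(M_1)=\det(\widetilde M_1)$, where $\widetilde M_1$ is the $16\times16$ minor obtained by deleting the last row and last column. With the rows ordered as in \eqref{eq: M1 3D}, this minor is
\begin{equation*}
\widetilde M_1=\begin{pmatrix} M & -\mathbf{D}^T\\ \mathbf{D} & 0\end{pmatrix},\qquad M=\mathrm{diag}(M_0,M_0,M_0),\quad \mathbf{D}=(D_1,D_2,D_3),
\end{equation*}
where $M_0=M_0(\mu^+,\mu^-)$ is the $5\times5$ matrix in \eqref{eq: 3D M0}. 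Here $D_k=\bar n_k\mathbf{e}_5^T$ with $\mathbf{e}_5\in\mathbb{R}^5$. This works because the column of $q^+$ in the block of rows $(A;C)$ for component $k$ is exactly $-D_k^T$.

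\emph{Step 2: Schur complement.} By Lemma~\ref{lemma:Scalar-Unisolvence-3D}, $M_0$ is invertible, so
\begin{equation*}
\det(\widetilde M_1)=\det(M_0)^3\sum_{k=1}^3 D_kM_0^{-1}D_k^T=\det(M_0)^3\,(\bar n_1^2+\bar n_2^2+\bar n_3^2)\,\mathbf{e}_5^TM_0^{-1}\mathbf{e}_5 .
\end{equation*}
Since $\bar{\bfn}$ is a unit vector, this equals $\det(M_0)^3\,\mathbf{e}_5^TM_0^{-1}\mathbf{e}_5$.

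\emph{Step 3: identify the corner entry.} By Cramer's rule, $\mathbf{e}_5^TM_0^{-1}\mathbf{e}_5=\mathrm{cof}_{55}(M_0)/\det(M_0)$. The cofactor $\mathrm{cof}_{55}(M_0)$ is the determinant of the first four rows and first four columns of $M_0$. It therefore involves only $A$ and not $C(\mu^+,\mu^-)$. From \eqref{eq:MatrixC-3D}, $C(1,1)=\mathbf{e}_5^T$. Expanding $\det M_0(1,1)$ along its last row gives $\det M_0(1,1)=\mathrm{cof}_{55}(M_0)$. Substituting back yields
\begin{equation*}
\det M_1(\mu^+,\mu^-)=\det\big(M_0(\mu^+,\mu^-)\big)^2\det\big(M_0(1,1)\big),
\end{equation*}
which is \eqref{eq: factor 3}.

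\emph{Step 4: unisolvence.} Lemma~\ref{lemma:Scalar-Unisolvence-3D} applies both for the given $\mu^\pm$ and for $\mu^+=\mu^-=1$, so both factors are nonzero and $\det M_1\neq0$.

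\emph{Main obstacle.} The only delicate point is Step 1: checking that the row and column ordering in \eqref{eq: M1 3D} really produces the saddle-point form above, with matching signs, so that the Schur complement has the sign used. Steps 2–4 are routine once this is verified. I would also note, without proving it here, that the argument of Theorem~\ref{thm:stress} carries over verbatim. The extra stress terms are multiples of $\mathbf{e}_2^T,\mathbf{e}_3^T,\mathbf{e}_4^T$, which lie in the row space of $M_0(1,1)=(A;\mathbf{e}_5^T)$. Consequently $\det M_2=\det M_1$ also holds in three dimensions.
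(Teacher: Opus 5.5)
Your proof is correct and follows the same route as the paper, which proves the statement by appeal to the two-dimensional argument of Theorem~\ref{thm:det-2d}: eliminate the pressure by a column operation, take the Schur complement with respect to $\mathrm{diag}(M_0,M_0,M_0)$, and identify $\mathbf{e}_5^TM_0^{-1}\mathbf{e}_5=\det M_0(1,1)/\det M_0(\mu^+,\mu^-)$ through the $(5,5)$ cofactor. The paper gives only a brief sketch, and your write-up supplies the details it leaves implicit: the saddle-point form of the $16\times16$ minor, the use of $|\bar{\bfn}|=1$, and the cofactor identification.
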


\begin{proof}
The matrix in \eqref{eq: M1 3D} contains three copies of the scalar elliptic block $A$ and one pressure block. As in the two-dimensional case, the coefficients $\mu^\pm$ enter only through the flux-jump rows represented by $C$. After eliminating the pressure variables, the remaining coupling is given by the divergence constraint \eqref{eq:Continuity-grad-3-3Dmatrix}, which depends only on the interface geometry and not on $\mu^\pm$. Therefore the Schur complement is identical to that obtained in the case $\mu^+=\mu^-=1$, which yields the factorization \eqref{eq: factor 3}. The unisolvence then follows from Lemma 4.1.
\end{proof}

We next consider the stress formulation and the corresponding space $\bfS_{h2}$. Replacing the gradient-form traction condition \eqref{eq:Continuity-grad-2} by the stress-form condition \eqref{eq:Continuity-epsi-2}, and using
\begin{equation}
	\jump{\textcolor{black}{(2\mu_h\varepsilon(\bfv)-q\mathbb{I})\bar \bfn}}_{\Gamma_{h,T}} 
	= \jump{\textcolor{black}{(\mu_h\nabla\bfv-q\mathbb{I})\bar \bfn}}_{\Gamma_{h,T}} + \jump{\textcolor{black}{\mu_h(\nabla\bfv)^T\bar \bfn}}_{\Gamma_{h,T}},
\end{equation}
we obtain the following local coefficient matrix
\textcolor{black}{
\begin{equation}\label{eq: M2 3}
M_2(\mu^+,\mu^-) = 
\begin{pmatrix}
A & 0 & 0 & 0 & 0\\
C & 0 & 0 & -\bar{n}_1 & \bar{n}_1\\
0 & A & 0 & 0 & 0\\
0 & C & 0 & -{\bar n}_2 & {\bar n}_2 \\
0 & 0 & A & 0 & 0\\
0 & 0 & C & -{\bar n}_3 & {\bar n}_3 \\
D_1 & D_2 & D_3 & 0 & 0\\
0 & 0 & 0 & \frac{|T^+_h|}{|T|} & \frac{|T^-_h  |}{|T|}
\end{pmatrix}
+(\mu^+-\mu^-)\begin{pmatrix}
0 & 0 & 0 & 0 & 0\\
C_{11} & C_{12} & C_{13} & 0 & 0\\
0 & 0 & 0 & 0 & 0\\
C_{21} & C_{22} & C_{23} & 0 & 0 \\
0 & 0 & 0 & 0 & 0\\
C_{31} & C_{32} & C_{33} & 0 & 0 \\
0 & 0 & 0 & 0 & 0\\
0 & 0 & 0 & 0 & 0
\end{pmatrix},
\end{equation}
}
where
\[
C_{11}=(0,\textcolor{black}{\bar n}_1,0,0,0),\qquad 
C_{12}=(0,\textcolor{black}{\bar n}_2,0,0,0),\qquad 
C_{13}=(0,\textcolor{black}{\bar n}_3,0,0,0),
\]
\[
C_{21}=(0,0,\textcolor{black}{\bar n}_1,0,0),\qquad 
C_{22}=(0,0,\textcolor{black}{\bar n}_2,0,0),\qquad 
C_{23}=(0,0,\textcolor{black}{\bar n}_3,0,0),
\]
\[
C_{31}=(0,0,0,\textcolor{black}{\bar n}_1,0),\qquad 
C_{32}=(0,0,0,\textcolor{black}{\bar n}_2,0),\qquad 
C_{33}=(0,0,0,\textcolor{black}{\bar n}_3,0).
\]

This result shows that the factorization is independent of spatial dimension and depends only on the algebraic structure of the interface constraints.
Now we show the equivalence of the stress and gradient formulations in three-dimensional Stokes problems.
\begin{theorem}[Equivalence of gradient and stress formulations]
 For any admissible interface configuration and any $\mu^\pm>0$,
\begin{equation}
   \det(M_2(\mu^+,\mu^-))= \det(M_1(\mu^+,\mu^-)).
 \end{equation}
\end{theorem}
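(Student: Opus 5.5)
The plan is to mimic the proof of Theorem~\ref{thm:stress}: show that every extra row contributed to $M_2$ by the term $(\mu^+-\mu^-)(C_{k1},C_{k2},C_{k3},0,0)$, $k=1,2,3$, is a linear combination of rows of $M_1$ that $M_2$ shares with $M_1$. Subtracting these combinations will then turn $M_2$ into $M_1$ by determinant-preserving row operations. The rows I intend to use are the three velocity blocks $(A,0,0,0,0)$, $(0,A,0,0,0)$, $(0,0,A,0,0)$ and the divergence row $(D_1,D_2,D_3,0,0)$. None of these is modified in passing from $M_1$ to $M_2$.

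First I will record the structure of the extra rows. In block form,
\[
(C_{k1},C_{k2},C_{k3}) = (\bar n_1\mathbf{e}_{k+1}^T,\ \bar n_2\mathbf{e}_{k+1}^T,\ \bar n_3\mathbf{e}_{k+1}^T),\qquad k=1,2,3,
\]
where $\mathbf{e}_j\in\mathbb{R}^5$ are the standard basis vectors. Similarly, $D_j=\bar n_j\mathbf{e}_5^T$.

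Next, I will use the invertibility of
\[
M_0(1,1)=\begin{pmatrix} A\\ \mathbf{e}_5^T\end{pmatrix},
\]
which holds by Lemma~\ref{lemma:Scalar-Unisolvence-3D} with $\mu^+=\mu^-=1$ (here $C(1,1)=\mathbf{e}_5^T$ by \eqref{eq:MatrixC-3D}). It implies that each $\mathbf{e}_{k+1}^T$ lies in the row space of $M_0(1,1)$. So there exist $\mathbf{h}_k\in\mathbb{R}^4$ and scalars $s_k$ with
\[
\mathbf{e}_{k+1}^T=\mathbf{h}_k^TA+s_k\mathbf{e}_5^T .
\]
Multiplying by $\bar n_j$ in the $j$-th block gives
\[
(C_{k1},C_{k2},C_{k3},0,0)=(\bar n_1\mathbf{h}_k^T,\bar n_2\mathbf{h}_k^T,\bar n_3\mathbf{h}_k^T)\,\mathrm{diag}(A,A,A)\ \text{(padded with zero columns)}+s_k(D_1,D_2,D_3,0,0).
\]
This is exactly the required combination of the $A$-block rows and the divergence row of $M_1$.

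Finally, I will subtract $(\mu^+-\mu^-)$ times this combination from the $k$-th flux-jump row of $M_2$ for each $k$. This recovers $M_1$ without changing the determinant, so $\det M_2=\det M_1$. Combined with the factorization \eqref{eq: factor 3} and Lemma~\ref{lemma:Scalar-Unisolvence-3D}, it also yields unisolvence of the stress formulation.

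The only point needing care is checking the form of the extra rows: that $\jump{\mu_h(\nabla\bfv)^T\bar\bfn}$ reduces to $(\mu^+-\mu^-)(\nabla\bfv^-)^T\bar\bfn$, as in 2D. This holds because
\[
\big(\nabla(m_kL)\big)^T\bar\bfn=(m_1\bar n_1+m_2\bar n_2+m_3\bar n_3)\,\bar\bfn ,
\]
which vanishes by \eqref{eq:Continuity-grad-3-3Dmatrix}. With the normalization $\bar n_1^2+\bar n_2^2+\bar n_3^2=1$ this is immediate, so I expect no real obstacle. The argument is a row-space argument and never needs $M_0(\mu^+,\mu^-)$ itself.
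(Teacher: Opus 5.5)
Your proposal is correct and follows the paper's own argument. The paper proves this theorem by appealing to the structure of Theorem~\ref{thm:stress}: the extra rows $(\mu^+-\mu^-)(C_{k1},C_{k2},C_{k3},0,0)$ are eliminated using the three $A$-block rows and the divergence row. This is exactly the row-space argument through $M_0(1,1)=\begin{pmatrix}A\\ \mathbf{e}_5^T\end{pmatrix}$ that you write out, with the three-dimensional details $C_{kj}=\bar n_j\mathbf{e}_{k+1}^T$ and $\mathbf{e}_{k+1}^T=\mathbf{h}_k^TA+s_k\mathbf{e}_5^T$ made explicit.
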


\begin{proof}
The proof is identical in structure to that of Theorem \ref{thm:stress}. The additional terms in \eqref{eq: M2 3} arise from $\jump{\textcolor{black}{\mu_h(\nabla\bfv)^T\bar \bfn}}_{\Gamma_{h,T}}$, under the symmetry-like structure of block $C_{ij}$, these terms can be eliminated by the first, third, fifth and seventh row blocks in $M_2$, and so do not change the determinant. Hence the local matrix for the stress formulation has the same determinant as that for the gradient formulation.
\end{proof}

The above unisolvence results provide the theoretical foundation for the existence of the corresponding IFE spaces. 

\begin{remark}
The three-dimensional result is not obtained by a direct extension of the two-dimensional analysis. Instead, it follows from a dimension-independent factorization principle revealed in Theorem 3.1. This shows that the algebraic structure of the Stokes IFE system is fundamentally governed by scalar elliptic components, independent of spatial dimension.
\end{remark}

\section{Immersed Finite Element Methods and Numerical Examples}
\label{Sec:NumericalResults}

In this section, we examine the accuracy and convergence of the CR-$P_0$ IFE methods for three-dimensional Stokes interface problems through a series of numerical experiments. Since numerical results for two-dimensional Stokes interface problems have already been extensively reported in the literature, see, e.g., \cite{JonesZhang2021,Jo2024,ji2022immersed}, we focus here on the three-dimensional case. 
\textcolor{black}{The non-symmetric formulation as in \cite{ji2022immersed} is adopted in the following numerical tests.}

 We study both the interpolation error and the IFE solution error for several interface geometries and coefficient contrasts. The velocity errors are measured in the $L^2$ norm and the broken $H^1$ seminorm, while the pressure errors are measured in the $L^2$ norm.

 The CR-$P_0$ IFE interpolation operator is defined by
 \[
\bfI_h :H^1(\Omega)^3\times L^2(\Omega)\rightarrow \bfS_h(\mathcal{T}_h)
\]
where $\bfS_h(\mathcal{T}_h)$ denotes either $S_{h1}(\mathcal T_h)$ or $S_{h2}(\mathcal T_h)$ depending on the formulation. For $(\bfu,p)\in H^1(\Omega)^3 \times L^2(\Omega)$, we write
\[
\bfI_h(\bfu,p)=\bfI_h(u_1,u_2,u_3,p) =(\tilde{u}_1, \tilde{u}_2, \tilde{u}_3, \tilde{p}) \in \bfS_h(\mathcal{T}_h),
\]
with
\[
\int_{F_i}\tilde u_1\,ds=\int_{F_i}u_1\,ds,\qquad 
\int_{F_i}\tilde u_2\,ds=\int_{F_i}u_2\,ds,\qquad
\int_{F_i}\tilde u_3\,ds=\int_{F_i}u_3\,ds,\qquad i=1,2,3,4,
\]
and
\[
\int_T \tilde p\,d\bfx=\int_T p\,d\bfx.
\]
The interpolation errors for the velocity components and the pressure are denoted by
\begin{equation*}
e_{k,I}=u_k-\tilde u_k,\qquad k=1,2,3,\qquad e_{p,I}=p-\tilde p.
\end{equation*}

Let $\Omega=[-1,1]^3$. We partition $\Omega$ into $N\times N\times N$ equal cubes and then subdivide each cube into six congruent tetrahedra. All three-dimensional Stokes interface problems are solved on these unfitted Cartesian meshes using both the gradient formulation in Subsection \ref{sec:WeakForm-P1} and the stress formulation in Subsection \ref{sec:WeakForm-P2}.

The experimental convergence rate is defined by
\[
\frac{\log(e_j/e_{j+1})}{\log(h_j/h_{j+1})}
=
\frac{\log(e_j/e_{j+1})}{\log(N_{j+1}/N_j)},
\]
where $e_j$ denotes  $\|e_{\bfu}\|_{L^2(\Omega)}$, $|e_{\bfu}|_{H^1(\Omega)}$, or $\|e_p\|_{L^2(\Omega)}$ on the $N_j\times N_j\times N_j$ mesh.

\subsection{Weak formulation for Problem (P1)}
\label{sec:WeakForm-P1}
For the gradient formulation, the nonconforming CR-$P_0$ IFE method seeks $(\bfu_h,p_h)\in \bfS_{h1}(\mathcal T_h)$ such that
\textcolor{black}{    
\begin{equation}\label{eq:WeakForm-grad}
a_h(\bfu_h,\bfv_h)+b_h(\bfv_h,p_h)-b_h(\bfu_h,q_h)+c_h(p_h,q_h)= (\bff, \bfv_h), \quad \forall (\bfv_h, q_h) \in \bfS_{h1}^0,
\end{equation}
where the bilinear form is defined as follows,
\begin{equation}\label{eq:binliear-grad}
\begin{aligned}
a_h(\bfu_h,\bfv_h)
&:=\sum_{T\in\mathcal{T}_h}\int_T\mu_h\nabla\bfu_h:\nabla\bfv_h d\bfx\\
&\hspace{-1cm}-\sum_{F\in\mathcal{F}_h^i}\int_F\left(\{\mu_h\nabla \bfu_h\bfn_F\}_F\cdot [\bfv_h]_F-\{\mu_h\nabla \bfv_h\bfn_F\}_F\cdot [\bfu_h]_F\right)ds+\sum_{F\in\mathcal{F}_h^i}\frac{1}{h_F}\int_F[\bfu_h]_F\cdot[\bfv_h]_Fds,\\
b_h(\bfu_h,q_h)
&:=-\sum_{T\in\mathcal{T}_h}\int_T q_h\nabla\cdot\bfv_h d\bfx+\sum_{F\in\mathcal{F}_h^i}\int_F\{q_h\}_F [\bfv_h\cdot\bfn_F]_Fds,\\
c_h(p_h,q_h)
&:=\sum_{F\in\mathcal{F}_h^i}h_F\int_F  [p_h]_F[q_h]_Fds,
\end{aligned}
\end{equation}
and the test space is defined by } 
\begin{equation*}
\bfS_{h1}^0(\mathcal{T}_h)
:=
\left\{ (\bfv, q) \in \bfS_{h1}(\mathcal{T}_h) : \int_F \bfv ds = 0\ \forall F \in \mathcal{F}_h^b\right\}.
\end{equation*}

\begin{example}[Planar interface]
   We first consider a three-dimensional Stokes interface problem with a planar interface 
   \[\Gamma = \{(x,y,z): z + {\pi}/{7} = 0 \}.\] 
   The interface divides $\Omega$ into the subdomains
\[
\Omega^-=\{(x,y,z): z<-\pi/7\},\qquad
\Omega^+=\{(x,y,z): z>-\pi/7\}.
\]
The exact velocity and pressure are chosen as
	\begin{align*}
	\bfu &= 
	\begin{cases} 
	\left(\dfrac{x^2}{\mu^{+}} \left( z+\dfrac{\pi}{7}\right),\  
    -\dfrac{y^2}{\mu^{+}} \left( z+\dfrac{\pi}{7}\right),\ 
    \dfrac{y-x}{\mu^{+}} \left( z+\dfrac{\pi}{7}\right)^2\right)^T \quad 
    \text{in} \ \Omega^+,\\
	\left(\dfrac{x^2}{\mu^{-}} \left( z+\dfrac{\pi}{7}\right),\ 
    -\dfrac{y^2}{\mu^{-}} \left( z+\dfrac{\pi}{7}\right),\ 
    \dfrac{y-x}{\mu^{-}} \left( z+\dfrac{\pi}{7}\right)^2\right)^T \quad 
    \text{in} \ \Omega^-, \\
	\end{cases} \\
	p &= 2e^x - e^y - e^z.
	\end{align*} 
\end{example}

Table \ref{tab:planar-grad-interpolation1} reports the interpolation errors and convergence rates. We observe second-order convergence for the velocity in the $L^2$ norm, first-order convergence in the broken $H^1$ semi-norm, and first-order convergence for the pressure in the $L^2$ norm. Table \ref{tab:planar-grad-solution1} reports the corresponding errors for the IFE solution, where the observed convergence rates are again optimal. These results are consistent with those reported for the two-dimensional case.

	\begin{table}[H]
	   \centering
	   \caption{Errors of CR-$P_0$ IFE interpolations for Example 1 with $\mu^- = 10$ and $\mu^+ = 1$.}
	   \label{tab:planar-grad-interpolation1}
	   \begin{tabular}{cccccccc}
		   \toprule
	    	$N$ & \#Dof 
			&$\|e_{\mathbf u,I}\|_{L^2(\Omega)}$ & Rate
			&$\|e_{p,I}\|_{L^2(\Omega)}$ & Rate 
            &$|e_{\mathbf u,I}|_{H^1(\Omega)}$ & Rate\\
         \midrule
           4 & 2976 & 4.4598e-2 & n/a  & 9.2596e-1 & n/a  & 1.1428e+0 & n/a \\
           8 & 22656 & 1.0998e-2 & 2.02 & 4.3743e-1 & 1.08 & 5.7510e-1 & 0.99 \\
		  16 & 176640 & 2.6853e-3 & 2.03 & 2.0163e-1 & 1.12 & 2.8780e-1 & 1.00\\
          24 & 590976 & 1.1789e-3 & 2.03 & 1.2748e-1 & 1.13 & 1.9193e-1 & 1.00\\
		  32 & 1394688 & 6.6018e-4 & 2.01 & 9.3710e-2 & 1.04  & 1.4404e-1 & 1.00\\
        \bottomrule 
    \end{tabular}
	\end{table}
    
    \begin{table}[H]
		\centering
    \textcolor{black}{
		\caption{Errors of CR-$P_0$ IFE solutions for Example 1 with $\mu^- = 10$ and $\mu^+ = 1$.}
		\label{tab:planar-grad-solution1}
		\begin{tabular}{cccccccc}
		\toprule 
		   $N$ & \#Dof 
		   &$\|e_{\mathbf u h}\|_{L^2(\Omega)}$ & Rate
		   &$\|e_{ph}\|_{L^2(\Omega)}$ & Rate 
          &$|e_{\mathbf u h}|_{H^1(\Omega)}$ & Rate\\
		\midrule
		   4 & 2976 & 1.4519e-1 & n/a  & 1.8997e-0 & n/a  & 1.3967e-0 & n/a \\
		   8 & 22656 & 4.0763e-2 & 1.83 & 9.1644e-1 & 1.05 & 7.2282e-1 & 0.95 \\
		  16 & 176640 & 1.0595e-2 & 1.94 & 3.6329e-1 & 1.33 & 3.6627e-1 & 0.98 \\
          24 & 590976  & 4.7204e-3 & 1.99 & 1.9180e-1 & 1.58 & 2.4533e-1 & 0.99 \\
          32 & 1394688 & 2.6654e-3 & 1.99 & 1.2454e-1 & 1.50 & 1.8458e-1 & 0.99 \\
        \bottomrule 
    \end{tabular}
    }
	\end{table}

   \begin{example}[Spherical interface]
	We next consider a problem with a spherical interface 
    \[\Gamma = \{(x,y,z):x^2 + y^2 + z^2 = {\pi}^2/{16}\}.\] 
    The spherical interface separates the domain $\Omega$ into two subdomains \[\Omega^- = \{(x,y,z):x^2 + y^2 + z^2 < {\pi}^2/{16}\},\qquad \Omega^+ = \{(x,y,z):x^2 + y^2 + z^2 > {\pi}^2/{16}\}.\] 
    The exact solutions $\bfu$ and $p$ are chosen as follows:
	\begin{align*}
		\bfu &= 
	\begin{cases} 
	\left(\dfrac{yz}{\mu^{+}} \left(x^2 + y^2 + z^2 - \dfrac{{\pi}^2}{16} \right),\ 
        -\dfrac{xz}{2\mu^{+}} \left(x^2 + y^2 + z^2 - \dfrac{{\pi}^2}{16} \right),\ 
        -\dfrac{xy}{2\mu^{+}} \left(x^2 + y^2 + z^2 - \dfrac{{\pi}^2}{16} \right) \right)^T \quad \text{in} \ \Omega^+, \\
	\left(\dfrac{yz}{\mu^{-}} \left(x^2 + y^2 + z^2 - \dfrac{{\pi}^2}{16} \right),\ 
        -\dfrac{xz}{2\mu^{-}} \left(x^2 + y^2 + z^2 - \dfrac{{\pi}^2}{16} \right),\ 
        -\dfrac{xy}{2\mu^{-}} \left(x^2 + y^2 + z^2 - \dfrac{{\pi}^2}{16} \right) \right)^T \quad \text{in} \ \Omega^-,
	   \end{cases} \label{eq:Example-Spherical-1} \\
		p &= x^3 - y^3 - z^3. 
	\end{align*}
    \end{example}
    The interpolation errors and IFE solution errors for this problem are reported in Tables \ref{tab:spherical-grad-interpolation1} and \ref{tab:spherical-grad-solution1}, respectively. To further examine robustness, Tables \ref{tab:spherical-grad-solution2} and \ref{tab:spherical-grad-solution3} present the IFE solution errors for reversed coefficient contrast and for a large coefficient jump. In all cases, the observed convergence rates remain optimal.
    
    \begin{table}[H]
		\centering
		\caption{Errors of CR-$P_0$ IFE interpolations for Example 2 with $\mu^- = 10$ and $\mu^+ = 1$.}
		\label{tab:spherical-grad-interpolation1}
		\begin{tabular}{cccccccc}
			\toprule
			$N$ & \#Dof 
			&$\|e_{\mathbf u,I}\|_{L^2(\Omega)}$ & Rate
			&$\|e_{p,I}\|_{L^2(\Omega)}$ & Rate 
            &$|e_{\mathbf u,I}|_{H^1(\Omega)}$ & Rate\\
			\midrule
			4  & 2976 & 1.0474e-1 & n/a  & 6.6142e-1 & n/a & 2.3749e+0 & n/a  \\
			8  & 22656 & 2.3615e-2 & 2.15  & 3.1436e-1 & 1.07 & 1.2352e+0 & 0.94  \\
			16 & 176640 & 5.7678e-3 & 2.03  & 1.5349e-1 & 1.03 & 6.2408e-1 & 0.99 \\
			24 & 590976 & 2.5490e-3 & 2.01  & 1.0193e-1 & 1.01 & 4.1692e-1 & 0.99 \\
            32 & 1394688 & 1.4307e-3 & 2.01  & 7.6225e-2 & 1.01 & 3.1279e-1 & 1.00 \\
        \bottomrule 
    \end{tabular}
	\end{table}

    \begin{table}[H]
		\centering
        \textcolor{black}{
		\caption{Errors of CR-$P_0$ IFE solutions for Example 2 with $\mu^- = 10$ and $\mu^+ = 1$.}
		\label{tab:spherical-grad-solution1}
		\begin{tabular}{cccccccc}
			\toprule 
			$N$ & \#Dof 
			&$\|e_{\mathbf u h}\|_{L^2(\Omega)}$ & Rate
			&$\|e_{ph}\|_{L^2(\Omega)}$ & Rate 
            &$|e_{\mathbf u h}|_{H^1(\Omega)}$ & Rate\\
		   \midrule
		   4 & 2976 & 1.9413e-1 & n/a  & 9.3614e-1 & n/a  & 2.6570e+0 & n/a \\
		   8 & 22656 & 5.7706e-2 & 1.75 & 4.1470e-1 & 1.17 & 1.4227e+0 & 0.90 \\
		   16 & 176640 & 1.5161e-2 & 1.93 & 1.9487e-1 & 1.09 & 7.2336e-1 & 0.98 \\
          24 & 590976  & 6.8364e-3 & 1.96 & 1.2497e-1 & 1.10 & 4.8432e-1 & 0.99 \\
          32 & 1394688 & 3.8709e-3 & 1.98 & 9.2188e-2 & 1.06 & 3.6374e-1 & 1.00 \\
        \bottomrule 
    \end{tabular}
    }
	\end{table}

\begin{table}[H]
	\centering
    \textcolor{black}{
	\caption{Errors of CR-$P_0$ IFE solutions for Example 2 with $\mu^- = 1$ and $\mu^+ = 10$.}
	\label{tab:spherical-grad-solution2}
	\begin{tabular}{cccccccc}
		\toprule 
		$N$ & \#Dof 
		&$\|e_{\mathbf u h}\|_{L^2(\Omega)}$ & Rate
		&$\|e_{ph}\|_{L^2(\Omega)}$ & Rate 
		&$|e_{\mathbf u h}|_{H^1(\Omega)}$ & Rate\\
		\midrule
		4 & 2976 & 4.5361e-2 & n/a  & 1.2609e-0 & n/a  & 4.3573e-1 & n/a \\
		8 & 22656 & 1.3247e-2 & 1.78 & 5.0290e-1 & 1.33 & 2.2667e-1 & 0.94 \\
		16 & 176640 & 3.6501e-3 & 1.86 & 2.1995e-1 & 1.19 & 1.2226e-1 & 0.89 \\
		24 & 590976  & 1.6120e-3 & 2.02 & 1.3401e-1 & 1.22 & 8.2905e-2 & 0.96 \\
        32 & 1394688 & 8.9306e-4 & 2.05 & 9.5697e-2 & 1.17 & 6.2436e-2 & 0.99 \\
		\bottomrule 
	\end{tabular}
    }
\end{table}

\begin{table}[H]
	\centering
    \textcolor{black}{
	\caption{Errors of CR-$P_0$ IFE solutions for Example 2 with $\mu^- = 1000$ and $\mu^+ = 1$.}
	\label{tab:spherical-grad-solution3}
	\begin{tabular}{cccccccc}
		\toprule 
		$N$ & \#Dof 
		&$\|e_{\mathbf u h}\|_{L^2(\Omega)}$ & Rate
		&$\|e_{ph}\|_{L^2(\Omega)}$ & Rate 
		&$|e_{\mathbf u h}|_{H^1(\Omega)}$ & Rate\\
		\midrule
		4 & 2976 & 3.8330e-1 & n/a  & 3.9735e+1 & n/a  & 4.5808e+0 & n/a \\
		8 & 22656 & 9.0497e-2 & 2.08 & 3.7486e+0 & 3.41 & 1.6116e+0 & 1.51 \\
		16 & 176640 & 2.3365e-2 & 1.95 & 8.0916e-1 & 2.21 & 7.6307e-1 & 1.08 \\
		24 & 590976  & 9.6373e-3 & 2.18 & 4.7203e-1 & 1.33 & 4.9849e-1 & 1.05 \\
        32 & 1394688 & 5.3689e-3 & 2.03 & 3.2809e-1 & 1.26 & 3.7069e-1 & 1.03 \\
		\bottomrule 
	\end{tabular}
    }
\end{table}

\subsection{Weak formulation for Problem (P2)}
\label{sec:WeakForm-P2}

For the stress formulation, the nonconforming CR-$P_0$ IFE method seeks $(\bfu_h,p_h)\in \bfS_{h2}(\mathcal T_h)$ such that

\textcolor{black}{    
\begin{equation}\label{eq:WeakForm-epsi}
\tilde{a}_h(\bfu_h,\bfv_h)+b_h(\bfv_h,p_h)-b_h(\bfu_h,q_h)+c_h(p_h,q_h)= (\bff, \bfv_h), \quad \forall (\bfv_h, q_h) \in \bfS_{h2}^0,
\end{equation}
with $b_h(\cdot,\cdot)$, $c_h(\cdot,\cdot)$ the same as in \eqref{eq:binliear-grad}, and
\begin{equation*}
\begin{aligned}
\tilde{a}_h(\bfu_h,\bfv_h)
&:=\sum_{T\in\mathcal{T}_h}\int_T 2\mu_h\varepsilon(\bfu_h):\varepsilon(\bfv_h) d\bfx+\sum_{F\in\mathcal{F}_h}\frac{1}{h_F}\int_F[\bfu_h]_F\cdot[\bfv_h]_Fds\\
&\hspace{-1cm}-\sum_{F\in\mathcal{F}_h^i}\int_F\left(\{2\mu_h\varepsilon(\bfu_h)\bfn_F\}_F\cdot [\bfv_h]_F-\{2\mu_h\varepsilon(\bfv_h)\bfn_F\}_F\cdot [\bfu_h]_F\right)ds+\sum_{F\in\mathcal{F}_h^i}\frac{1}{h_F}\int_F[\bfu_h]_F\cdot[\bfv_h]_Fds,\\
\end{aligned}
\end{equation*}
and the test space } 
\begin{equation*}
   \bfS_{h2}^0(\mathcal{T}_h) := \left\{ (\bfv, q) \in \bfS_{h2}(\mathcal{T}_h) : \int_F \bfv ds = 0\ \forall F \in \mathcal{F}_h^b\right\}.
\end{equation*}


\begin{example}[Spherical interface]
	We revisit the spherical interface problem from Example 2, but now solve it using the stress-form weak formulation \eqref{eq:WeakForm-epsi} instead of the gradient-form formulation \eqref{eq:WeakForm-grad}.
  \end{example}

   Since the interpolation errors in this example are very similar to those in Example 2, we report only the IFE solution errors in Table \ref{tab:spherical-epsi-solution1}. The observed convergence rates are again optimal in all reported norms.


	\begin{table}[H]
		\centering
        \textcolor{black}{
		\caption{Errors of CR-$P_0$ IFE solutions for Example 3 with $\mu^- = 10$ and $\mu^+ = 1$.}
		\label{tab:spherical-epsi-solution1}
		\begin{tabular}{cccccccc}
			\toprule
			$N$ & \#Dof 
			&$\|e_{\mathbf u h}\|_{L^2(\Omega)}$ & Rate
			&$\|e_{ph}\|_{L^2(\Omega)}$ & Rate 
            &$|e_{\mathbf u h}|_{H^1(\Omega)}$ & Rate\\
			\midrule
		   4 & 2976 & 1.7743e-1 & n/a & 1.0944e+0 & n/a & 2.5977e+0 & n/a \\
		   8 & 22656 & 4.5852e-2 & 1.95 & 4.5489e-1 & 1.27 & 1.3515e+0 & 0.94 \\
		  16 & 176640 & 1.1460e-2 & 2.00 & 2.0712e-1 & 1.14 & 6.8160e-1 & 0.99 \\
        24 & 590976 & 5.0323e-3 & 2.03 & 1.3182e-1 & 1.11 & 4.5453e-1 & 1.00 \\
        \bottomrule 
    \end{tabular}
    }
	\end{table}

\section{Conclusion}
\label{Sec:Conclusions}

We established a structural relationship between scalar elliptic and Stokes Crouzeix--Raviart immersed finite element spaces. In particular, we showed that the local CR-$P_0$ Stokes IFE matrix admits a determinant factorization in terms of the scalar elliptic CR IFE matrix. This provides a simple and unified explanation of the unisolvence of the Stokes IFE spaces.
The same factorization principle applies to both gradient and stress formulations and extends naturally to three dimensions, reducing the Stokes analysis to the scalar elliptic case. Numerical results confirm optimal convergence for both velocity and pressure.
These results demonstrate that the algebraic structure of Stokes IFE spaces is fundamentally inherited from scalar elliptic components, offering a clear framework for analysis and construction of coupled IFE methods.

\section*{Declarations}

\subsection*{Funding}
\noindent Guozhu Yu is partially supported by National Natural Science Foundation of China (Grant No. 12561071) \\
Xu Zhang is partially supported by the National Science Foundation (Grant No. DMS-2110833)

\subsection*{Conflict of Interest}
\noindent The authors declare that they have no conflict of interest.

\subsection*{Data Availability Statement}
\noindent This study did not generate or analyze any new datasets. All data supporting the findings of this work are contained within the article.


\begin{thebibliography}{10}

\bibitem{Adjerid2015}
S.~Adjerid, N.~Chaabane, and T.~Lin.
\newblock An immersed discontinuous finite element method for {Stokes}
  interface problems.
\newblock {\em Computer Methods in Applied Mechanics and Engineering},
  293:170--190, 2015.

\bibitem{2023AdjeridBabuskaGuoLin}
Slimane Adjerid, Ivo Babu\v~ska, Ruchi Guo, and Tao Lin.
\newblock An enriched immersed finite element method for interface problems
  with nonhomogeneous jump conditions.
\newblock {\em Computer Methods in Applied Mechanics and Engineering},
  404:115770, 2023.

\bibitem{2018AdjeridBenromdhaneLin}
Slimane Adjerid, Mohamed Ben-Romdhane, and Tao Lin.
\newblock Higher degree immersed finite element spaces constructed according to
  the actual interface.
\newblock {\em Computers \& Mathematics with Applications}, 75(6):1868--1881,
  2018.

\bibitem{Babuska2000}
I.~Babuska and J.~E. Osborn.
\newblock Can a finite element method perform arbitrarily badly?
\newblock {\em Mathematics of Computation}, 69(230):443--462, 2000.

\bibitem{2021ChenZhang}
Yuan Chen and Xu~Zhang.
\newblock A {$P_2$}-{$P_1$} partially penalized immersed finite element method
  for {Stokes} interface problems.
\newblock {\em International Journal of Numerical Analysis and Modeling},
  18(1):120--141, 2021.

\bibitem{2024ChenZhang}
Yuan Chen and Xu~Zhang.
\newblock Solving {N}avier-{S}tokes equations with stationary and moving
  interfaces on unfitted meshes.
\newblock {\em Journal of Scientific Computing}, 98(1):19, 2024.

\bibitem{chen1998finite}
Z.~Chen and J.~Zou.
\newblock Finite element methods and their convergence for elliptic and
  parabolic interface problems.
\newblock {\em Numerische Mathematik}, 79(2):175--202, 1998.

\bibitem{Guo2019c}
R.~Guo and T.~Lin.
\newblock A group of immersed finite-element spaces for elliptic interface
  problems.
\newblock {\em IMA Journal of Numerical Analysis}, 39(1):482--511, 2019.

\bibitem{Guo2019a}
R.~Guo and T.~Lin.
\newblock A higher degree immersed finite element method based on a cauchy
  extension for elliptic interface problems.
\newblock {\em SIAM Journal on Numerical Analysis}, 57(4):1545--1573, 2019.

\bibitem{2020GuoLin}
Ruchi Guo and Tao Lin.
\newblock An immersed finite element method for elliptic interface problems in
  three dimensions.
\newblock {\em Journal of Computational Physics}, 414:109478, 2020.

\bibitem{2021GuoZhang}
Ruchi Guo and Xu~Zhang.
\newblock Solving three-dimensional interface problems with immersed finite
  elements: {A}-priori error analysis.
\newblock {\em Journal of Computational Physics}, 441:110445, 2021.

\bibitem{2025GuoZhang}
Ruchi Guo and Xu~Zhang.
\newblock An enriched immersed finite element method for three-dimensional
  interface problems with nonhomogeneous jump.
\newblock {\em submitted, arXiv:2509.12555}, 2025.

\bibitem{He2019a}
C.~He and X.~Zhang.
\newblock Residual-based a posteriori error estimation for immersed finite
  element methods.
\newblock {\em Journal of Scientific Computing}, 81(3):2051--2079, 2019.

\bibitem{ji2023immersed}
H.~Ji.
\newblock An immersed {Crouzeix--Raviart} finite element method in {2D} and
  {3D} based on discrete level set functions.
\newblock {\em Numerische Mathematik}, 153(2):279--325, 2023.

\bibitem{ji2025mini}
H.~Ji, D.~Liang, and Q.~Zhang.
\newblock A mini immersed finite element method for two-phase {Stokes} problems
  on {Cartesian} meshes.
\newblock {\em IMA Journal of Numerical Analysis}, 45(4):1985--2022, 2025.

\bibitem{ji2022immersed}
H.~Ji, F.~Wang, J.~Chen, and Z.~Li.
\newblock An immersed {CR-P0} element for stokes interface problems and the
  optimal convergence analysis.
\newblock {\em Computer Methods in Applied Mechanics and Engineering},
  399:115306, 2022.

\bibitem{ji2023analysis}
Haifeng Ji, Feng Wang, Jinru Chen, and Zhilin Li.
\newblock Analysis of nonconforming {IFE} methods and a new scheme for elliptic
  interface problems.
\newblock {\em ESAIM: Mathematical Modelling and Numerical Analysis},
  57(4):2041--2076, 2023.

\bibitem{Jo2024}
G.~Jo and D~Y. Kwak.
\newblock A new immersed finite element method for two-phase {Stokes} problems
  having discontinuous pressure.
\newblock {\em Computational Methods in Applied Mathematics}, 24(1):49--58,
  2024.

\bibitem{JonesZhang2021}
D.~Jones and X.~Zhang.
\newblock A class of nonconforming immersed finite element methods for {Stokes}
  interface problems.
\newblock {\em Journal of Computational and Applied Mathematics}, 392:113493,
  2021.

\bibitem{Li1998}
Z.~Li.
\newblock The immersed interface method using a finite element formulation.
\newblock {\em Applied Numerical Mathematics}, 27(3):253--267, 1998.

\bibitem{2004LiLinLinRogers}
Z.~Li, T.~Lin, Y.~Lin, and R.~C. Rogers.
\newblock An immersed finite element space and its approximation capability.
\newblock {\em Numerical Methods for Partial Differential Equations},
  20(3):338--367, 2004.

\bibitem{Lin2015a}
T.~Lin, Y.~Lin, and X.~Zhang.
\newblock Partially penalized immersed finite element methods for elliptic
  interface problems.
\newblock {\em SIAM Journal on Numerical Analysis}, 53(2):1121--1144, 2015.

\bibitem{2019LinSheenZhang}
Tao Lin, Dongwoo Sheen, and Xu~Zhang.
\newblock {A nonconforming immersed finite element method for elliptic
  interface problems}.
\newblock {\em Journal of Scientific Computing}, 79(1):442--463, 2019.

\bibitem{Vallaghe2010}
S.~Vallagh\'{e} and T.~Papadopoulo.
\newblock A trilinear immersed finite element method for solving the
  electroencephalography forward problem.
\newblock {\em SIAM Journal on Scientific Computing}, 32(4):2379--2394, 2010.

\bibitem{2022WangZhangZhuang}
Jin Wang, Xu~Zhang, and Qiao Zhuang.
\newblock An immersed {C}rouzeix-{R}aviart finite element method for
  {N}avier-{S}tokes equations with moving interfaces.
\newblock {\em International Journal of Numerical Analysis and Modeling},
  19(4):563--586, 2022.

\bibitem{Xu1982}
J.~Xu.
\newblock Error estimates of the finite element method for the 2nd order
  elliptic equations with discontinuous coefficients.
\newblock {\em Xiangtan University}, 1:1--5, 1982.

\end{thebibliography}


\end{document}